\newtheorem{teor}{Theorem}
\newtheorem{lema}[teor]{Lemma}
\newtheorem{coro}[teor]{Corollary}
\newtheorem{rem}[teor]{Remark}
\newtheorem{defi}[teor]{Definition}
\title{Rigidity results for stochastically 
complete maximal hypersurfaces in
Generalized Robertson-Walker spacetimes}
\author{Mar\'ia \'A. Medina and Jos\'e A. S. Pelegr\'in \\[6mm]
Departamento de Inform\'atica y An\'alisis Num\'erico, \\[0.5mm]
Universidad de C\'ordoba, 14071 C\'ordoba, Spain \\ E-mails\textup{:
\texttt{medipoz@gmail.com, jpelegrin@uco.es}} \\[3mm]}
\date{}
\begin{document}

\maketitle

\thispagestyle{empty}

\begin{abstract}
In this article we obtain new rigidity results 
for stochastically complete maximal 
hypersurfaces in Generalized Robertson-Walker spacetimes that 
satisfy the Null Energy Condition. Under 
appropiate geometric assumptions we prove new parametric uniqueness and 
nonexistence results as 
well as obtain a Calabi-Bernstein type result for the maximal 
hypersurface equation in these ambient spacetimes.
\end{abstract}
\vspace*{5mm}

\noindent \textbf{MSC 2020:} 53C24, 53C42, 53C80.

\noindent  \textbf{Keywords:} Maximal hypersurface, 
Stochastic completeness, 
Generalized Robertson-Walker spacetime.

\section{Introduction}

A maximal hypersurface in a Lorentzian manifold is a spacelike 
hypersurface with zero mean curvature. Their name is due to 
the fact that they represent critical points of the area functional 
for compactly supported normal variations \cite{BF} that 
are area maximizing in certain ambient manifolds \cite{CPR}. Their 
importance in General Relativity comes from the fact that they describe 
the transition between an expanding and a contracting phase 
of the universe in some relevant models. They also  
enable us to understand the geometric structure of the spacetime 
since in some 
asymptotically flat spacetimes there exists a foliation by 
maximal hypersurfaces \cite{BF}. Moreover,  
the Cauchy problem for Einstein's equation with initial 
conditions on a maximal hypersurface 
is reduced to a second order nonlinear elliptic differential 
equation and a first order linear differential system \cite{Li}.

From a mathematical perspective, maximal hypersurfaces 
present interesting 
uniqueness properties. Indeed, the classical Calabi-Bernstein theorem 
states that the only complete maximal hypersurfaces in the 
Lorentz-Minkowski 
spacetime $\mathbb{L}^{n+1}$ are the spacelike 
hyperplanes. This result was proved 
by Calabi \cite{Ca} for $n\leq 4$ and extended to 
arbitrary dimension by Cheng 
and Yau \cite{CY}. These uniqueness results have been 
generalized to a wide variety of 
ambient spacetimes (see, for instance, 
\cite{A-R-S1, BF, Mo, PRR1, PRR3}). Among these spacetimes, 
our results will be focused on 
the models known as 
Generalized Robertson-Walker (GRW) spacetimes, introduced 
in \cite{A-R-S1} as an extension of the classical 
Robertson-Walker models to the case 
where the fiber does not necessarily have constant sectional 
curvature. As a consequence, GRW spacetimes can describe 
the universe in a more 
accurate scale where it may not be spatially 
homogeneous \cite{RaS}. 

The first 
attempts to obtain uniqueness results for maximal 
hypersurfaces in GRW spacetimes  
relied on the compactness of the 
hypersurface \cite{A-R-S1}. Since then, many efforts 
have been made to extend these results to the 
non compact case due to the fact that 
the experimental data and certain theoretical arguments 
suggest that spatially open 
models (i.e., spacetimes that do not admit any compact 
spacelike hypersurface) provide a more accurate 
description of our current universe 
\cite{Chiu}. For instance, in \cite{RRS1} and 
\cite{RRS2} the compactness 
assumption was replaced by assuming the parabolicity 
of the maximal hypersurface. Nevertheless, parabolic 
spacelike hypersurfaces 
might lead to a violation of the holographic principle 
under the spacelike 
entropy bound \cite{P}, encouraging the use of different 
assumptions such 
as certain energy conditions \cite{PRR2} or being contained in a slab 
\cite{RRS3} to obtain new Calabi-Bernstein type
results for complete maximal hypersurfaces in GRW spacetimes.

A crucial assumption in all the 
aforementioned results was the 
completeness of the maximal hypersurface, which allowed 
the use of certain 
techniques such as some integral inequalities for the 
compact case,  
maximum principles for parabolic Riemannian manifolds or the 
Omori-Yau maximum principle. In this article we will study 
stochastically complete maximal hypersurfaces, which may 
not be complete as we 
will see in Section \ref{s2}. Indeed, stochastic completeness 
is a more general concept 
than parabolicity, since any parabolic Riemannian 
manifold is 
stochastically complete \cite[Cor. 6.4]{Gr}. Moreover, this 
property has an interesting 
physical meaning. Namely, if a particle follows a Brownian 
motion on a 
stochastically complete Riemannian manifold the total probability of 
finding it in the state space is constantly equal 
to $1$, i.e., stochastic processes 
on these manifolds have infinite lifetime. 

Our aim in this article is to obtain new rigidity 
results for stochastically complete maximal hypersurfaces 
in GRW spacetimes. The paper is organized as 
follows. The fundamental definitions concerning these 
geometric objects as well as a deeper insight in the 
notion of stochastic completeness will be given in 
Section \ref{s2}. Our main parametric results will 
be obtained in Section \ref{sesu}, where a relation between 
the height function of a stochastically complete 
maximal hypersurface and the warping 
function of the GRW spacetime is obtained in Theorem 
\ref{teotau}. We recall that previous works for stochastically 
complete spacelike hypersurfaces 
in GRW spacetimes have been focused on finding 
certain bounds for 
the height function in terms of the mean curvature 
\cite{ARSc}. These bounds on 
the height function have been combined 
with more restrictive 
energy assumptions and hypotheses on the norm of the 
second fundamental form to obtain some uniqueness 
results \cite{Lim, LP}. However, to prove our main results 
in this section we will not impose 
any bound on the maximal hypersurface's height function. Instead, 
we will deal with stochastically complete maximal 
hypersurfaces with bounded hyperbolic angle to obtain our main 
uniqueness result (Theorem \ref{teomon}) in GRW spacetimes 
that obey the 
Null Convergence Condition (which is a 
weaker energy condition than the one used in 
\cite{ARR, A-R-S1, Lim}) as 
well as in the special case of product 
spacetimes (Theorem \ref{teopro}). Moreover, we will also obtain 
interesting corollaries for well known models of the universe 
as well as new non existence results for these type of 
hypersurfaces (Theorem \ref{teononex}). In addition, in 
Section \ref{suselm} we study stochastically complete maximal 
hypersurfaces in Lorentz-Minkowski and de Sitter spacetimes and 
extend the classical Calabi-Bernstein theorem to these type 
of hypersurfaces in Theorem \ref{teosf}. To conclude, 
Section \ref{secb} is devoted to use our previous 
parametric theorems to obtain a new uniqueness 
result for maximal graphs in GRW spacetimes (Theorem \ref{teocb}).

\section{Preliminaries}
\label{s2} 

\subsection{Maximal hypersurfaces in GRW spacetimes}

Let $(F,g_F)$ be an $n(\geq 2)$-dimensional (connected) Riemannian 
manifold, $I \subseteq \mathbb{R}$ an open interval 
and $f$ a positive smooth function on $I$. Consider 
the product manifold 
$I \times F$ endowed with the Lorentzian metric

\begin{equation}
\label{metrica}
\overline{g} = -\pi^*_{I} (dt^2) +f(\pi_{I})^2 \, \pi_{F}^* (g_F), 
\end{equation}

\noindent where $\pi_{I}$ and $\pi_{F}$ denote the projections onto $I$ and
$F$, respectively. The resulting Lorentzian manifold $(I \times F, \overline{g})$ is 
a warped product with base $(I,-dt^2)$, 
fiber $(F,g_F)$ and warping function $f$ \cite[Chap. 7]{O'N}. $(I \times F, \overline{g})$ 
endowed with the time orientation 
induced by $\partial_t := \partial / \partial t$ is called 
an $(n+1)$-dimensional 
Generalized Robertson-Walker (GRW) spacetime \cite{A-R-S1} and will be 
denoted by $I \times_f F$.

In any GRW spacetime the distinguished timelike and future pointing
vector field $K: = f({\pi}_I)\partial_t$ satisfies

\begin{equation}
\label{conexion} 
\overline{\nabla}_X K = f'({\pi}_I)\,X
\end{equation}

\noindent for any vector field $X$, where $\overline{\nabla}$
is the Levi-Civita connection of (\ref{metrica}). Thus, $K$ is conformal 
and its metrically equivalent $1$-form is closed.

A smooth immersion $\psi: M \longrightarrow I \times_f F$ of an 
$n$-dimensional manifold $M$ is called a spacelike hypersurface if the
Lorentzian metric (\ref{metrica}) induces a Riemannian
metric $g$ on $M$ through $\psi$. We will denote the
restriction of $\pi_I$ along $\psi$ by $\tau$. In addition, the time-orientation 
of $I \times_f F$ allows to define on 
every spacelike hypersurface $M$ in $I \times_f F$, a unique unitary
timelike vector field $N \in \mathfrak{X}^\bot(M)$ globally defined
on $M$ with the same time orientation as $\partial_t$.

This unitary normal timelike vector 
field $N$ enables us to define an 
associated shape operator $A$ and a mean curvature function 
given by $H:= -(1/n) \mathrm{trace}(A)$, where the minus sign is introduced 
to write the mean curvature vector field of 
$\psi$ as $HN$. A spacelike hypersurface with zero mean curvature is called 
a maximal hypersurface. Recall that $H \equiv 0$ if and only if 
the spacelike hypersurface is,
locally, a critical point of the $n$-dimensional area functional \cite{BF}. In addition, 
the hyperbolic 
angle $\varphi$ at any point of $M$ between $N$ and $\partial_t$ is defined by 

\begin{equation}
\label{coosh}
\cosh \varphi=-\overline{g}(N,\partial_t).
\end{equation}

Any GRW spacetime $I \times_f F$ is foliated by the distinguished family 
of spacelike hypersurfaces known as   
spacelike slices. Each spacelike slice $\{t_{0}\}\times F$, $t_{0}\in I$ is 
totally umbilical with constant
mean curvature $H=f'(t_{0})/f(t_{0})$. Hence,  a spacelike slice
is maximal (and thus, totally
geodesic) if and only if $f'(t_{0})=0$. A spacelike 
hypersurface in $I \times_f F$ is contained in a
spacelike slice if and only if the hyperbolic angle $\varphi$
identically vanishes. 

Let $\psi: M \rightarrow I \times_f F$ be an $n$-dimensional
spacelike hypersurface in a GRW spacetime. Denoting by

$$\partial_t^\top:= \partial_t+\overline{g}(N,\partial_t)N$$

\noindent the tangential component of $\partial_t$ along $\psi$, we 
see that the gradient of $\tau$ is

\begin{equation}\label{part}
\nabla \tau=-\partial_t^\top.
\end{equation}

Indeed, relating the hyperbolic angle and the norm of the 
gradient of $\tau$ by means of (\ref{coosh}) and (\ref{part}), we have

\begin{equation}
\label{se2vp}
\sinh^2 \varphi = |\nabla \tau|^2.
\end{equation}

Moreover, for the tangential
component of $K$ along $\psi$ we get from (\ref{conexion})

\begin{equation}\label{gradcosh}
\nabla \overline{g}(K,N)=-AK^\top,
\end{equation}

\noindent where $A$ is the shape operator with respect to $N$. Denoting 
by $\nabla$ the Levi-Civita
connection of the metric $g$, the Gauss and Weingarten
formulas for the immersion $\psi$ are given, respectively, by

\begin{equation}\label{GF}
\overline{\nabla}_X Y=\nabla_X Y-g(AX,Y)N
\end{equation}

\noindent and

\begin{equation}\label{WF}
AX=-\overline{\nabla}_X N,
\end{equation}

\noindent where $X,Y\in\mathfrak{X}({M})$. Combining
(\ref{conexion}) with (\ref{GF}) and (\ref{WF}) we obtain

\begin{equation}\label{KT}
\nabla_X K^\top=-f(\tau)\overline{g}(N,\partial_t)AX+f'(\tau)X,
\end{equation}

\noindent where $X\in\mathfrak{X}({M})$ and $f'(\tau):=f'\circ \tau$. Also, 
using (\ref{coosh}) and (\ref{KT}) we can compute the gradient 
of $\cosh \varphi$, obtaining

\begin{equation}
\label{gch}
\nabla \cosh \varphi=A\partial_t^\top + \frac{f'(\tau)}{f(\tau)} \cosh \varphi \ \partial_t^\top.
\end{equation}

\noindent Furthermore, (\ref{part}) and (\ref{KT}) yield

\begin{equation}
\label{nt}
\nabla_X \partial_t^\top = \frac{f'(\tau)}{f(\tau)} \ g(X, \partial_t^\top) \ \partial_t^\top + \cosh \varphi \ A X + \frac{f'(\tau)}{f(\tau)} X.
\end{equation}

These results will help us to prove our main theorems along the paper.

\subsection{Stochastic completeness}

Stochastic completeness is the property of a stochastic process to 
have infinite lifetime. In other words, if a particle follows a 
Brownian motion, the total probability of finding it in the state space 
is constantly equal to $1$. This can be 
expressed analytically as follows.

\begin{defi}
\label{defsc}
A Riemannian manifold $(M, g)$ is said to be stochastically complete 
if for some (and hence, any) $(x, t) \in M \times (0,+\infty)$

$$ \int_M p(x, y, t) dy = 1,$$

\noindent where $p(x, y, t)$ is the heat kernel of the 
Laplacian operator. 
\end{defi}

Note that in the above definition the Riemannian manifold $(M,g)$ 
is not assumed to be geodesically complete. In fact, as it was 
shown in \cite{Do}, we can construct a minimal heat kernel on an 
arbitrary Riemannian manifold as the supremum of the Dirichlet heat 
kernels on an exhausting sequence of relatively compact domains 
with smooth boundary. This analytic definition is equivalent to 
several other properties (for a proof, see \cite[Thm. 6.2]{Gr} and 
\cite[Thm. 2.8]{AMR}, for instance).

\begin{lema}
\label{lemastco}
Let $(M, g)$ be a Riemannian manifold. Then the following statements are 
equivalent:

\begin{enumerate}[(i)]
\item M is stochastically complete.

\item For every $\lambda > 0$, the only nonnegative bounded $C^2$ 
solution of $\Delta u \geq \lambda u$ on $M$ is $u \equiv 0$.

\item For every function $u \in C^2(M)$ with $u^* = \sup_M u < + \infty$ 
there exists a sequence of 
points $\{p_k \} \subset M$ satisfying 

$$ u (p_k) > u^* - \frac{1}{k} , \ \mathrm{and} \ \Delta u (p_k) < \frac{1}{k}$$

\noindent for each $k \in \mathbb{N}$.

\item For every function $f \in C^0(M)$ and every 
$u \in C^2(M)$ with $u^* = \sup_M u < + \infty$ solving the 
differential inequality $\Delta u \geq f(u),$ we have $f(u^*) \leq 0$.
\end{enumerate}

\end{lema}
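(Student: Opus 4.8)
The plan is to prove the equivalence of the four characterizations of stochastic completeness via a cycle of implications, since this is the standard route and each arrow isolates a self-contained analytic idea. I would organize it as $(i) \Rightarrow (ii) \Rightarrow (iii) \Rightarrow (iv) \Rightarrow (i)$, or possibly a slightly rearranged cycle, choosing whichever ordering makes the hardest analytic input appear only once.

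First I would establish $(i) \Rightarrow (ii)$. Assuming $M$ is stochastically complete, I would take a nonnegative bounded $u \in C^2(M)$ with $\Delta u \geq \lambda u$ for some $\lambda > 0$ and show $u \equiv 0$. The natural tool is the probabilistic (or semigroup) representation of solutions: stochastic completeness is equivalent to the statement that the heat semigroup $P_t$ satisfies $P_t 1 = 1$, and one compares $u$ with $e^{-\lambda t} P_t u$. Using $\Delta u \geq \lambda u$ together with the maximum principle for the parabolic operator $\partial_t - \Delta$, one deduces that $e^{\lambda t} u \leq P_t u \leq (\sup u) P_t 1 = \sup u$ for all $t$, and letting $t \to \infty$ forces $\sup u \leq 0$, hence $u \equiv 0$ by nonnegativity.

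Next, $(ii) \Rightarrow (iii)$ is the passage from a Liouville-type statement to a weak maximum principle (the Khas'minskii / Pigola--Rigoli--Setti formulation). Here I would argue by contraposition: if (iii) fails, there is a $u \in C^2(M)$ with $u^* < +\infty$ such that outside some sublevel set $\Delta u$ is bounded below by a positive constant wherever $u$ is near $u^*$; from this one manufactures a nonnegative bounded subsolution of $\Delta v \geq \lambda v$ that is not identically zero, contradicting (ii). The implication $(iii) \Rightarrow (iv)$ is essentially formal: given $\Delta u \geq f(u)$ with $u^* < +\infty$, one applies the sequence $\{p_k\}$ from (iii), so that $f(u(p_k)) \leq \Delta u(p_k) < 1/k$ while $u(p_k) \to u^*$; passing to the limit and using continuity of $f$ yields $f(u^*) \leq 0$.

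Finally, to close the cycle I would prove $(iv) \Rightarrow (i)$, or alternatively $(iv) \Rightarrow (ii) \Rightarrow (i)$. Given (iv), one recovers (ii) immediately by taking $f(u) = \lambda u$: any nonnegative bounded subsolution of $\Delta u \geq \lambda u$ must satisfy $\lambda u^* \leq 0$, so $u^* \leq 0$ and $u \equiv 0$. Returning from this Liouville property to the analytic Definition \ref{defsc} is the step I expect to be the main obstacle, since it requires the genuinely nontrivial direction of the theory: one must show that failure of stochastic completeness produces a nonzero bounded nonnegative solution of $\Delta u = \lambda u$, typically via the function $u = \lim_k (1 - P_t^{\Omega_k} 1)$ built from the Dirichlet semigroups on an exhaustion $\{\Omega_k\}$, and verify its boundedness, nonnegativity, and the eigenvalue inequality. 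Since this is precisely the content of the cited references \cite{Gr} and \cite{AMR}, I would either reproduce the heat-kernel construction in detail or, given that the statement is presented as a known characterization, cite \cite[Thm. 6.2]{Gr} and \cite[Thm. 2.8]{AMR} for this direction and supply full arguments only for the elementary implications above.
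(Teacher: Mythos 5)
The paper never proves this lemma: it is quoted as known background, with the proof explicitly deferred to \cite[Thm. 6.2]{Gr} and \cite[Thm. 2.8]{AMR}, so there is no internal argument to compare yours against. Your plan is precisely the standard proof contained in those references --- the cycle of implications plus the hard converse to the Liouville property --- and your fallback of citing \cite{Gr} and \cite{AMR} for the hard direction coincides with what the paper itself does. The elementary implications you give are correct: $(iii)\Rightarrow(iv)$ by evaluating $f(u(p_k)) \le \Delta u(p_k) < 1/k$ and letting $k \to \infty$, and $(iv)\Rightarrow(ii)$ by taking $f(t)=\lambda t$. Your contrapositive scheme for $(ii)\Rightarrow(iii)$ (the negation of $(iii)$ yields a superlevel set $\{u > u^* - 1/k\}$ on which $\Delta u \ge 1/k$, from which one manufactures a bounded, nonnegative, nontrivial $C^2$ solution of $\Delta v \ge \lambda v$ by composing $u$ with a suitable convex function vanishing below a level close to $u^*$) is the Pigola--Rigoli--Setti argument and does work, though the construction of that composition is where the real work lies and you leave it implicit.

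Two technical points in your sketch need repair. First, in $(i)\Rightarrow(ii)$, the inequality $P_t u \le (\sup u)\, P_t 1 \le \sup u$ holds on \emph{every} manifold, because the minimal heat semigroup is sub-Markovian; the step that genuinely uses stochastic completeness is the other one, $e^{\lambda t} u \le P_t u$. Comparison of a bounded subsolution with the minimal semigroup is false in general: take $u$ a positive constant on a stochastically incomplete manifold, where $P_t u = u\, P_t 1 < u$. That comparison must be justified by exhaustion, i.e., by comparing on each relatively compact $\Omega_k$ via the parabolic maximum principle and then using $P_t^{\Omega_k} 1 \uparrow P_t 1 = 1$, which is exactly where hypothesis $(i)$ enters. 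Second, the function $\lim_k \bigl(1 - P_t^{\Omega_k} 1\bigr) = 1 - P_t 1$ solves the heat equation, not $\Delta u = \lambda u$; the bounded nontrivial object witnessing the failure of $(ii)$ on a stochastically incomplete manifold is obtained either as $u = 1 - \lambda R_\lambda 1$, with $R_\lambda$ the resolvent (equivalently, by taking the Laplace transform in $t$ of $1 - P_t 1$), or as the monotone limit of the solutions of the Dirichlet problems $\Delta u_k = \lambda u_k$ in $\Omega_k$, $u_k = 1$ on $\partial \Omega_k$. With these two corrections your outline is exactly the proof in the sources the paper cites.
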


As we have previously mentioned, stochastic completeness and geodesic 
completeness are independent concept. Indeed, the first 
example of a geodesically complete stochastically incomplete 
manifold was found in \cite{Az}. In his example, the 
Riemannian manifold's sectional curvature rapidly diverges to $- \infty$ 
as the Riemannian distance to a fixed point increases. In some sense 
the negative curvature plays the role of a drift that sweeps the 
Brownian particle to infinity in finite time. Conversely, since 
stochastic completeness is preserved by substracting a 
compact \cite[Cor. 6.5]{Gr}, we can easily see that 
$\mathbb{R}^3 \setminus \{0\}$ with the usual Euclidean metric 
is a geodesically incomplete stochastically complete manifold
(see \cite[Ex. 2.2]{AMR} for a different proof of this 
result). Therefore, an interesting problem is to determine which 
geometric properties ensure the stochastic completeness of a Riemannian 
manifold. For the moment, the best volume growth sufficient 
condition for stochastic completeness of a complete Riemannian manifold 
is the next one obtained in \cite{Gri}.

\begin{lema}
\label{teogri}
Let $(M,g)$ be a complete Riemannian manifold. If, for some point 
$o \in M$ the geodesic ball centered at $o$ with radius 
$r$, $B_r$, satisfies

$$\frac{r}{\log \mathrm{vol}(B_r)} \not\in L^1(+\infty),$$

\noindent then $M$ is stochastically complete.
\end{lema}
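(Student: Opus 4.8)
The plan is to deduce stochastic completeness from the analytic characterization in Lemma \ref{lemastco}(ii): it suffices to show that for every $\lambda > 0$ the only nonnegative bounded $C^2$ solution $u$ of $\Delta u \geq \lambda u$ on $M$ is $u \equiv 0$. I would fix such a $u$ with $u^* = \sup_M u < +\infty$ and argue by contradiction, assuming $u \not\equiv 0$. Since $u \geq 0$ and $\Delta u \geq \lambda u \geq 0$, the function $u$ is subharmonic, and I would extract quantitative growth of its $L^2$-mass on geodesic balls directly from the differential inequality. Here completeness enters in an essential way: by the Hopf--Rinow theorem every geodesic ball $B_r$ centered at the fixed point $o$ is relatively compact, so the quantities $I(r) := \int_{B_r} u^2\, dV$ are finite, the family $\{B_r\}_{r>0}$ exhausts $M$, and all the integrations by parts against compactly supported cutoffs below are legitimate.

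First I would establish a Caccioppoli (reverse-Poincar\'e) inequality. Testing $\Delta u \geq \lambda u$ against $u\eta^2$ for a Lipschitz cutoff $\eta$ and integrating by parts, then absorbing the term $\int_M \eta^2 |\nabla u|^2\, dV$ via Young's inequality, yields $\lambda \int_M u^2 \eta^2\, dV \leq \int_M u^2 |\nabla \eta|^2\, dV$. Choosing $\eta$ equal to $1$ on $B_r$, vanishing outside $B_R$ and with $|\nabla \eta| \leq 1/(R-r)$, this reads $\lambda\, I(r) \leq (R-r)^{-2}\bigl(I(R)-I(r)\bigr)$, that is, $I(R) \geq I(r)\bigl(1 + \lambda (R-r)^2\bigr)$ for all $R > r$. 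Since $u \not\equiv 0$ there is some $r_0$ with $I(r_0) > 0$, and this inequality already forces $I$ to grow.

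Next I would convert this into a sharp growth statement by iterating along an adaptively chosen sequence of radii $r_0 < r_1 < \cdots$ with increments $s_k = r_{k+1}-r_k$ to be optimized, obtaining $\log I(r_N) \geq \log I(r_0) + \sum_{k=0}^{N-1} \log\bigl(1+\lambda s_k^2\bigr)$. On the other hand $I(r) \leq (u^*)^2\,\mathrm{vol}(B_r)$, so $\log I(r_N) \leq \log \mathrm{vol}(B_{r_N}) + C$. Balancing the increments $s_k$ against the growth of $\log \mathrm{vol}$ turns the gap between these two bounds into an integrability criterion, and it is precisely the hypothesis $r/\log \mathrm{vol}(B_r) \notin L^1(+\infty)$ that forces the lower bound for $\log I(r_N)$ eventually to exceed the volume upper bound. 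This contradiction shows $I \equiv 0$, whence $u \equiv 0$.

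I expect the delicate step to be this last one: extracting the sharp $L^1$ volume test from the iteration, rather than the cruder condition $\mathrm{vol}(B_r) \lesssim e^{cr^2}$ that the naive Caccioppoli argument yields. This requires calibrating the radii $s_k$ optimally to the local growth rate of $\log \mathrm{vol}(B_r)$, which amounts to comparing $I$ with the solution of an associated first-order ODE whose finite-time blow-up is governed exactly by the integrability of $r/\log \mathrm{vol}(B_r)$. This is the content of Grigor'yan's argument \cite{Gri}; the remainder of the proof is the routine verification that the integrations by parts and the exhaustion are valid, which the completeness of $(M,g)$ guarantees.
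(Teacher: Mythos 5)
The paper itself offers no proof of this lemma: it is quoted directly from Grigor'yan \cite{Gri} (see also \cite[Thm. 9.1]{Gr}), so the only meaningful comparison is between your sketch and Grigor'yan's actual argument. Your proposal has a genuine gap at exactly the step you flag as ``delicate'', and the gap cannot be repaired within the elliptic framework you set up. From your Caccioppoli inequality $I(R) \geq I(r)\bigl(1+\lambda(R-r)^2\bigr)$, the total logarithmic gain along any sequence of radii $r_0 < r_1 < \cdots < r_N$ is $\sum_k \log\bigl(1+\lambda s_k^2\bigr)$ with $s_k = r_{k+1}-r_k$. Since $\log(1+\lambda s^2) \leq c\sqrt{\lambda}\, s$ for all $s>0$, where $c$ is the (finite, universal) supremum of $\log(1+x^2)/x$, no calibration of the increments can produce more than $\log I(r_N) \geq \log I(r_0) + c\sqrt{\lambda}\,(r_N-r_0)$: the lower bound you can extract grows at most \emph{linearly} in $r_N$, for any fixed $\lambda$. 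But the hypothesis of the lemma allows, for instance, $\mathrm{vol}(B_r) = e^{r^2}$ (then $r/\log \mathrm{vol}(B_r) = 1/r \notin L^1(+\infty)$), and even $\mathrm{vol}(B_r)=e^{r^2\log r}$; for such manifolds the upper bound $\log I(r) \leq \log\mathrm{vol}(B_r) + 2\log u^*$ is quadratic in $r$ and is never overtaken by a linear lower bound, so no contradiction arises. The elliptic iteration therefore proves nothing beyond (roughly) exponential volume growth; your parenthetical claim that the naive argument already reaches the threshold $e^{cr^2}$ is itself too optimistic.

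The missing idea is that the sharp $L^1$ criterion is not accessible by iterating a reverse-Poincar\'e inequality on the $\lambda$-massive function $u$: Grigor'yan's proof is \emph{parabolic}, not elliptic. One works with bounded solutions of the heat equation (equivalently, with the heat semigroup characterization of stochastic completeness) and proves an integrated maximum principle, i.e., weighted $L^2$ estimates with Gaussian-type weights $e^{-d(x,o)^2/Ct}$ propagated over time intervals whose lengths are chosen according to the volume function; it is precisely this quadratic space-time scaling that converts the growth of $\log\mathrm{vol}(B_r)$ into the integrability condition on $r/\log\mathrm{vol}(B_r)$. Your sketch explicitly defers this step to \cite{Gri} (``this is the content of Grigor'yan's argument''), which makes the proposal a reduction of the statement to itself rather than a proof; and since the mechanism you propose to replace it provably cannot yield the stated threshold, the ``optimal calibration of the radii'' you invoke does not exist.
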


Moreover, following the terminology introduced in \cite{PRS}, 
condition \textit{(iii)} in Lemma \ref{lemastco} is also 
known as the weak maximum principle for the Laplacian. As a consequence, 
any Riemannian manifold where the Omori-Yau maximum principle holds 
will be stochastically complete. Hence, from the results obtained in 
\cite{Om} and \cite{Ya} we have

\begin{lema}
\label{teooy}
Let $(M,g)$ be a complete Riemannian 
manifold with Ricci curvature bounded 
from below. Then, the Omori-Yau maximum principle for the Laplacian 
holds on $M$. In particular, $M$ is stochastically complete.
\end{lema}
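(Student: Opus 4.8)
The plan is to establish the full Omori--Yau maximum principle, that is, to show that for every $u \in C^2(M)$ with $u^* = \sup_M u < +\infty$ there exists a sequence $\{p_k\} \subset M$ satisfying $u(p_k) > u^* - 1/k$, $|\nabla u(p_k)| < 1/k$ and $\Delta u(p_k) < 1/k$ for each $k$. Once this is done, discarding the gradient estimate recovers precisely condition \textit{(iii)} of Lemma \ref{lemastco}, and the stochastic completeness of $M$ then follows from the equivalence \textit{(iii)} $\Leftrightarrow$ \textit{(i)} proved there.

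First I would fix a point $o \in M$ and let $r(x) = d(x,o)$ be the Riemannian distance. Writing the lower Ricci bound as $\mathrm{Ric} \geq -(n-1)c^2$ for some constant $c \geq 0$, the completeness of $M$ (via Hopf--Rinow) guarantees that $r$ is a proper Lipschitz function and that the Laplacian comparison theorem is available: away from $o$ and from the cut locus one has $\Delta r \leq (n-1)\,c\,\coth(c\,r)$ (with the convention $\Delta r \le (n-1)/r$ when $c = 0$), whose right-hand side stays bounded once $r$ is bounded away from $0$. The role of the curvature hypothesis is exactly to produce this one-sided control of $\Delta r$, while completeness supplies properness and the existence of the minimizing geodesics needed below.

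Next I would build an auxiliary exhaustion with controlled derivatives, for instance $\gamma := \log(1 + r^2) \geq 0$. Since $|\nabla r| = 1$ almost everywhere, a direct computation gives $|\nabla \gamma| = 2r/(1+r^2) \leq 1$ and $\Delta\gamma = \frac{2r}{1+r^2}\Delta r + \frac{2(1-r^2)}{(1+r^2)^2}$, so combining with the comparison estimate shows that $|\nabla\gamma|$ and $\Delta\gamma$ are bounded on $M$ by a constant $C$ depending only on $n$ and $c$. I would then run the penalization argument: for each $\varepsilon > 0$ the function $u_\varepsilon := u - \varepsilon\gamma$ tends to $-\infty$ at infinity (as $\gamma$ is proper), hence attains a global maximum at some $p_\varepsilon \in M$. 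Since $\gamma \geq 0$, the map $\varepsilon \mapsto \sup_M u_\varepsilon$ is monotone and one checks $\sup_M u_\varepsilon \uparrow u^*$, whence $u(p_\varepsilon) \to u^*$; and at the maximum $\nabla u(p_\varepsilon) = \varepsilon\nabla\gamma(p_\varepsilon)$ together with $\Delta u(p_\varepsilon) \leq \varepsilon\Delta\gamma(p_\varepsilon)$ yield $|\nabla u(p_\varepsilon)| \leq \varepsilon C$ and $\Delta u(p_\varepsilon) \leq \varepsilon C$. Choosing $\varepsilon = \varepsilon_k \to 0$ with $\varepsilon_k C < 1/k$ produces the desired sequence.

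The main obstacle is that $r$, and hence $\gamma$, need not be $C^2$ where $p_\varepsilon$ falls on the cut locus of $o$, so the inequality $\Delta u_\varepsilon(p_\varepsilon) \leq 0$ cannot simply be read off from elementary calculus. I would handle this with Calabi's trick: taking a minimizing geodesic from $o$ to $p_\varepsilon$ and sliding the base point slightly toward $p_\varepsilon$ produces a smooth upper support function for $r$ near $p_\varepsilon$ that agrees with $r$ at $p_\varepsilon$ and still obeys the comparison bound, which legitimizes the second-order inequality in the barrier sense. This technical point is the only delicate step; the remaining estimates are routine once the support function is in hand.
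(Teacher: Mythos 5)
Your proof is correct, but note that the paper itself gives no proof of this lemma: it is stated as a quotation of the classical results of Omori \cite{Om} and Yau \cite{Ya}, with stochastic completeness then following from the equivalence \textit{(iii)} $\Leftrightarrow$ \textit{(i)} of Lemma \ref{lemastco} --- exactly the reduction you make in your opening paragraph. What you have written out is the standard modern proof of that cited result (essentially the argument in the Pigola--Rigoli--Setti/Al\'ias--Mastrolia--Rigoli line of references the paper uses elsewhere): penalize $u$ by $\varepsilon\gamma$ with $\gamma=\log(1+r^2)$, use completeness (Hopf--Rinow) to get properness of $r$ and hence a global maximum point $p_\varepsilon$, use the Laplacian comparison theorem under $\mathrm{Ric}\geq-(n-1)c^2$ to bound $\Delta\gamma$ above by a constant $C(n,c)$, and repair the failure of smoothness of $r$ on the cut locus by Calabi's support-function trick. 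The individual steps are sound: the monotonicity argument giving $u(p_\varepsilon)\to u^*$ is right; the identities $|\nabla\gamma|\le 1$ and $\Delta\gamma=\frac{2r}{1+r^2}\Delta r+\frac{2(1-r^2)}{(1+r^2)^2}$ are right; and the barrier $d(o,o')+d(o',\cdot)$ is indeed smooth near $p_\varepsilon$ and still obeys a uniform comparison estimate, since $p_\varepsilon\in\mathrm{Cut}(o)$ forces $r(p_\varepsilon)\geq\mathrm{inj}(o)>0$, so $d(o',p_\varepsilon)$ stays bounded away from zero when $o'$ is chosen close enough to $o$. The only microscopic omission is at the very end: besides $\varepsilon_k C<1/k$ you must also take $\varepsilon_k$ small enough that $u(p_{\varepsilon_k})>u^*-1/k$, which is available precisely because $u(p_\varepsilon)\to u^*$. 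Compared with the paper's citation-only treatment, your argument buys self-containedness and in fact delivers the full Omori--Yau statement including the gradient estimate $|\nabla u(p_k)|<1/k$, which is stronger than the weak maximum principle (condition \textit{(iii)} of Lemma \ref{lemastco}) that the rigidity theorems of the paper actually use.
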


In fact, this bound on the Ricci curvature was weakened in 
\cite[Thm. 2.13]{AMR}, obtaining the maximum amount of negative 
curvature that can be allowed to preserve stochastic completeness.

\begin{lema}
\label{teoricsc}
Let $(M,g)$ be a complete $n$-dimensional Riemannian manifold, let 
$o \in M$ be a fixed origin and denote by $r$ the Riemannian distance 
function from $o$. If the radial Ricci curvature satisfies

$$\mathrm{Ric}(\nabla r, \nabla r) \geq -(n-1) G^2(r),$$

\noindent for some positive nondecreasing continuous function 
$G$ with 

$$\frac{1}{G} \not\in L^1(+\infty).$$

\noindent Then $M$ is stochastically complete.
\end{lema}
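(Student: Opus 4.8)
The plan is to reduce the claim to the volume growth criterion of Lemma \ref{teogri}, which applies since $M$ is assumed complete. Concretely, I would show that the radial Ricci bound forces $\mathrm{vol}(B_r)$ to grow slowly enough that $\frac{r}{\log \mathrm{vol}(B_r)} \not\in L^1(+\infty)$.

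First I would invoke the Laplacian comparison theorem. Let $h$ denote the solution of the model problem $h'' - G^2 h = 0$ on $[0,+\infty)$ with $h(0)=0$ and $h'(0)=1$; since $G>0$ one has $h>0$ for $r>0$. The hypothesis $\mathrm{Ric}(\nabla r,\nabla r)\geq -(n-1)G^2(r)$ yields, away from the cut locus of $o$ and in the barrier (distributional) sense across it,
$$\Delta r \leq (n-1)\,\frac{h'(r)}{h(r)}.$$
Integrating this estimate in geodesic polar coordinates gives a Bishop--Gromov type volume bound
$$\mathrm{vol}(B_r) \leq c_n \int_0^r h(s)^{n-1}\,ds$$
for a dimensional constant $c_n$.

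Next I would translate the analytic hypothesis on $G$ into a growth bound for $h$. Because $G$ is nondecreasing, on each interval $[0,R]$ we have $G^2(r)\leq G^2(R)$, so by Sturm comparison with the constant-potential solution $\sinh(G(R)\,r)/G(R)$ one obtains $h(R)\leq \sinh(G(R)\,R)/G(R)$. Since $G$ is positive and nondecreasing, $G(r)\geq G(0)>0$, and hence $\log h(r) \leq G(r)\,r + O(1)$; using that $h$ is increasing to bound $\int_0^r h^{n-1}\leq r\,h(r)^{n-1}$, this gives $\log \mathrm{vol}(B_r) \leq (n-1)\,G(r)\,r + O(\log r)$. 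Consequently, for $r$ large,
$$\frac{r}{\log \mathrm{vol}(B_r)} \geq \frac{1}{2(n-1)\,G(r)},$$
and since $1/G \not\in L^1(+\infty)$ the right-hand side is not integrable at infinity. Thus $\frac{r}{\log \mathrm{vol}(B_r)} \not\in L^1(+\infty)$, and Lemma \ref{teogri} gives that $M$ is stochastically complete.

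The step I expect to be most delicate is the Laplacian comparison across the cut locus: the pointwise bound on $\Delta r$ holds only on the smooth locus, so passing to the global volume estimate requires either the standard observation that the cut locus has measure zero or the Calabi trick of working with the barriers $r_\varepsilon(x)=d(x_\varepsilon,x)+\varepsilon$ based at slightly displaced points $x_\varepsilon$. An alternative route, avoiding volume estimates altogether, would be to verify Khasminskii's criterion by constructing a radial exhaustion $\gamma=\phi(r)$ with $\phi\nearrow+\infty$ and $\Delta\gamma\leq\lambda\gamma$ outside a compact set; feeding the comparison bound for $\Delta r$ into the identity $\Delta\gamma=\phi''(r)+\phi'(r)\Delta r$ reduces the construction of $\phi$ to the same integrability condition on $1/G$.
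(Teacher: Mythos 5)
Your proposal is correct, but note that the paper itself offers no proof of this lemma at all: it is quoted verbatim from \cite[Thm.~2.13]{AMR}, so there is no internal argument to compare against. What you have done is supply an independent proof, and it takes a genuinely different route from the cited source. In \cite{AMR} the statement is obtained as a consequence of the stronger assertion that the full Omori--Yau maximum principle holds under this radial Ricci bound, proved via a Khasminskii-type test function construction --- essentially the alternative you sketch in your last paragraph, where $\Delta\gamma=\phi''(r)+\phi'(r)\Delta r$ is controlled by the comparison estimate. Your main argument instead reduces everything to Grigor'yan's volume criterion (Lemma \ref{teogri}), and the chain of estimates is sound: Sturm comparison against the constant-potential solution gives $h(R)\leq \sinh(G(R)R)/G(R)$, hence $\log h(r)\leq G(r)r+O(1)$ using $G(r)\geq G(0)>0$; monotonicity of $h$ (from $h''=G^2h\geq 0$, $h'(0)=1$) justifies $\int_0^r h^{n-1}\leq r\,h(r)^{n-1}$; and since $(n-1)G(r)r\geq (n-1)G(0)r$ eventually dominates the $O(\log r)$ term, the bound $r/\log\mathrm{vol}(B_r)\geq 1/(2(n-1)G(r))$ holds for large $r$, so non-integrability of $1/G$ transfers as claimed. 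Your handling of the cut locus (barrier sense, or integrating the Jacobian over the segment domain) is the standard fix and is adequate. Two features are worth recording: your route proves only stochastic completeness, which is exactly what the lemma claims, whereas the \cite{AMR} route yields the stronger Omori--Yau principle; and your route has the virtue of making the paper self-contained, since it derives Lemma \ref{teoricsc} from the paper's own Lemma \ref{teogri} rather than from an external theorem. The only cosmetic gap is the degenerate case $\mathrm{vol}(B_r)\leq 1$ (so that $\log\mathrm{vol}(B_r)\leq 0$), where your displayed inequality is vacuous; but there non-integrability of $r/\log\mathrm{vol}(B_r)$ at infinity holds trivially, so the conclusion is unaffected.
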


\section{Main results}
\label{sesu}

Let us begin by relating the existence of a stochastically complete 
maximal hypersurface in a GRW spacetime with bounded height 
function with the behaviour of the warping function by means 
of the next result.

\begin{teor}
\label{teotau}
Let $\psi: M \longrightarrow I \times_f F$ be a stochastically complete 
maximal hypersurface in a GRW spacetime.

\begin{enumerate}[(i)]
\item If $\tau^* = \sup_M \tau < + \infty$, then $\dfrac{f'}{f} (\tau^*) \geq 0$.

\item If $\tau_* = \inf_M \tau > - \infty$, then $\dfrac{f'}{f} (\tau_*) \leq 0$.
\end{enumerate}
\end{teor}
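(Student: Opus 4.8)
The plan is to bring the analytic characterization of stochastic completeness in Lemma \ref{lemastco} to bear on the height function $\tau$, after first computing its Laplacian in terms of the warping function. The natural starting point is to trace the identity \eqref{nt}. Recalling that $\nabla\tau = -\partial_t^\top$ by \eqref{part}, the Hessian of $\tau$ acts as $X \mapsto -\nabla_X\partial_t^\top$, so summing \eqref{nt} over an orthonormal frame $\{E_i\}$ produces three contributions: the term $\tfrac{f'(\tau)}{f(\tau)}g(E_i,\partial_t^\top)\partial_t^\top$ gives $\tfrac{f'(\tau)}{f(\tau)}|\partial_t^\top|^2$, the term $\cosh\varphi\,AE_i$ gives $\cosh\varphi\,\mathrm{trace}(A) = -nH\cosh\varphi$, and the term $\tfrac{f'(\tau)}{f(\tau)}E_i$ gives $n\tfrac{f'(\tau)}{f(\tau)}$. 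Using maximality $H\equiv 0$ together with $|\partial_t^\top|^2 = |\nabla\tau|^2 = \sinh^2\varphi$ from \eqref{se2vp}, I obtain the clean expression
\begin{equation*}
\Delta\tau = -\frac{f'(\tau)}{f(\tau)}\bigl(n + |\nabla\tau|^2\bigr).
\end{equation*}

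With this formula in hand I would prove (i) by applying the weak maximum principle, statement (iii) of Lemma \ref{lemastco}, to $u = \tau$, which is legitimate precisely because $\tau^* = \sup_M\tau < +\infty$ and $M$ is stochastically complete. This produces a sequence $\{p_k\}\subset M$ with $\tau(p_k) > \tau^* - 1/k$ and $\Delta\tau(p_k) < 1/k$. The argument then runs by contradiction: assume $\tfrac{f'}{f}(\tau^*) < 0$. Since $f$ is positive and smooth, $f'/f$ is continuous, so there are $\varepsilon,\delta > 0$ with $\tfrac{f'}{f}(t) < -\varepsilon$ whenever $|t - \tau^*| < \delta$. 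Because $\tau^* - 1/k < \tau(p_k) \le \tau^*$, for all large $k$ the height $\tau(p_k)$ lies in this neighbourhood, whence
\begin{equation*}
\Delta\tau(p_k) = -\frac{f'(\tau(p_k))}{f(\tau(p_k))}\bigl(n + |\nabla\tau(p_k)|^2\bigr) > \varepsilon\bigl(n + |\nabla\tau(p_k)|^2\bigr) \ge \varepsilon n > 0,
\end{equation*}
which contradicts $\Delta\tau(p_k) < 1/k$ as soon as $1/k < \varepsilon n$. Hence $\tfrac{f'}{f}(\tau^*) \ge 0$. Part (ii) follows by the same reasoning applied to $-\tau$: here $\sup_M(-\tau) = -\tau_* < +\infty$ yields points $q_k$ with $\Delta\tau(q_k) > -1/k$, and assuming $\tfrac{f'}{f}(\tau_*) > 0$ would force $\Delta\tau(q_k) \le -\varepsilon n < 0$ for large $k$, the desired contradiction.

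The step I expect to require the most care is the presence of the gradient term $|\nabla\tau|^2$. The sequence furnished by the weak maximum principle carries no information about $|\nabla\tau(p_k)|$, so one cannot simply pass to a limit in the expression for $\Delta\tau$. The resolution, and the reason the argument works at all, is that $|\nabla\tau|^2 \ge 0$ gives the uniform lower bound $n + |\nabla\tau|^2 \ge n$; this lets the sign of $f'/f$ alone drive $\Delta\tau$ away from zero by a fixed amount, producing the contradiction with $\Delta\tau(p_k) < 1/k$ without needing the gradient to converge. A minor point worth flagging is that $f'/f$ is being evaluated at $\tau^*$ (resp. $\tau_*$), which is meaningful because the relevant heights approach $\tau^*$ from within $I$, where $f'/f$ is continuous.
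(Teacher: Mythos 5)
Your proof is correct and takes essentially the same route as the paper: you derive the Laplacian formula $\Delta\tau = -\tfrac{f'(\tau)}{f(\tau)}(n+\sinh^2\varphi)$ (the paper's \eqref{latau}) by tracing \eqref{nt} with $H\equiv 0$, and then apply the weak maximum principle, statement \emph{(iii)} of Lemma \ref{lemastco}, to $\tau$ and $-\tau$. The paper's proof is identical in substance but terser; your explicit contradiction argument, resting on the uniform bound $n+\sinh^2\varphi\geq n$ that neutralizes the uncontrolled gradient term along the sequence, is precisely what the paper leaves implicit.
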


\begin{proof}

Using \eqref{part} and \eqref{nt} we can compute the height's function 
Laplacian, obtaining

\begin{equation}
\label{latau}
\Delta \tau = - \frac{f'(\tau)}{f(\tau)} ( n + \sinh^2 \varphi ).
\end{equation}

Hence, if $M$ is stochastically complete and 
$\tau^* = \sup_M \tau < + \infty$, \eqref{latau} and 
Lemma \ref{lemastco} yield $\dfrac{f'}{f} (\tau^*) \geq 0$. 

On the other hand, if $\tau_* = \inf_M \tau > - \infty$, 
from Lemma \ref{lemastco} it is easy to 
see that there exists a sequence 
of points $\{q_k \}_{k \in \mathbb{N}} \subset M$ such that 

$$\lim_{k \to +\infty} \tau(q_k) = \tau_* , \ \mathrm{and} \ \lim_{k \to +\infty} \Delta \tau (q_k) \geq 0 .$$

Using \eqref{latau} again we obtain $\dfrac{f'}{f} (\tau_*) \leq 0$.
\end{proof}

As a direct consequence of Theorem \ref{teotau} we obtain the following 
nonexistence result for stochastically complete maximal surfaces with 
bounded height function (compare with \cite[Prop. 4.3]{ARSc}).

\begin{coro}
\label{cortau}
Let $I \times_f F$ be a GRW spacetime.

\begin{enumerate}[(i)]
\item If $\sup (f'/f) < 0$, there are no stochastically complete 
maximal hypersurfaces bounded away from future infinity.

\item If $\inf (f'/f) > 0$, there are no stochastically complete 
maximal hypersurfaces bounded away from past infinity.
\end{enumerate}
\end{coro}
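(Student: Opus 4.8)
The plan is to derive Corollary~\ref{cortau} directly from Theorem~\ref{teotau} by a straightforward contrapositive argument, so the main work has already been done in proving the theorem. First I would observe that the phrase ``bounded away from future infinity'' means that the height function satisfies $\tau^* = \sup_M \tau < +\infty$, while ``bounded away from past infinity'' means $\tau_* = \inf_M \tau > -\infty$. With this translation in hand, each statement becomes an immediate application of the corresponding part of Theorem~\ref{teotau}.

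For part \textit{(i)}, I would argue by contradiction. Suppose there exists a stochastically complete maximal hypersurface $\psi: M \longrightarrow I \times_f F$ that is bounded away from future infinity, i.e.\ with $\tau^* = \sup_M \tau < +\infty$. Then Theorem~\ref{teotau}\,\textit{(i)} forces $\dfrac{f'}{f}(\tau^*) \geq 0$. On the other hand, since $\tau^* \in I$ (the supremum is attained as a limit of values of $\tau$, all lying in $I$), the global hypothesis $\sup_I (f'/f) < 0$ gives in particular $\dfrac{f'}{f}(\tau^*) < 0$, which is a contradiction. Hence no such hypersurface can exist. Part \textit{(ii)} is entirely analogous: assuming a stochastically complete maximal hypersurface bounded away from past infinity ($\tau_* = \inf_M \tau > -\infty$), Theorem~\ref{teotau}\,\textit{(ii)} yields $\dfrac{f'}{f}(\tau_*) \leq 0$, whereas $\inf_I (f'/f) > 0$ forces $\dfrac{f'}{f}(\tau_*) > 0$, again a contradiction.

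The only subtlety worth checking, and what I expect to be the single point requiring care, is the membership $\tau^* \in I$ (respectively $\tau_* \in I$), since $I$ is an open interval and a supremum or infimum of an open set need not lie inside it. One must confirm that the value at which $f'/f$ is evaluated in Theorem~\ref{teotau} is a genuine point of $I$ where $f$ and $f'$ are defined, rather than a boundary value of $I$. This is implicit in the statement of Theorem~\ref{teotau}: the conclusion $\frac{f'}{f}(\tau^*) \geq 0$ only makes sense when $\tau^* \in I$, so the theorem already presupposes this. Consequently the corollary follows with no additional analytic input, and the proof reduces to recording these two contradictions.
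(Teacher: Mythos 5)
Your proof is correct and is exactly the paper's (implicit) argument: the paper offers no separate proof, presenting the corollary as a direct consequence of Theorem~\ref{teotau}, which is precisely your contrapositive reading. The subtlety you flag about $\tau^* \in I$ is resolved by the standard meaning of ``bounded away from future (past) infinity,'' namely $\tau \leq t_1$ (resp.\ $\tau \geq t_0$) for some $t_1, t_0 \in I$, which forces $\tau^*$ (resp.\ $\tau_*$) to lie in $I$ because $I$ is an interval containing both $t_1$ (resp.\ $t_0$) and the values of $\tau$.
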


Thus, a bound on the height's function restricts the existence of 
stochastically complete maximal hypersurfaces in GRW spacetimes 
with a monotone expanding/contracting behaviour. Our 
next approach will be focused on the hyperbolic angle. Hence, 
consider a maximal hypersurface $\psi: M \longrightarrow I \times_f F$ 
and let us compute the Laplacian of the 
function $\sinh^2 \varphi$ on $M$ as follows.

\begin{equation}
\label{la1}
\Delta \sinh^2 \varphi = 2 \cosh \varphi \ \Delta \cosh \varphi + 2 |\nabla \cosh \varphi|^2.
\end{equation}

Choosing a local orthonormal reference frame $\{E_1, \dots, E_n \}$ on $TM$ we obtain the Laplacian of $\cosh \varphi$ using (\ref{gch}) 

\begin{equation}
\label{lap1}
\Delta \cosh \varphi  = \sum_{i=1}^n g(\nabla_{E_i} (A \partial_t^\top), E_i) +   \sum_{i=1}^n g \left(\nabla_{E_i} \left(\frac{f'(\tau)}{f(\tau)} \cosh \varphi \ \partial_t^\top \right), E_i\right).
\end{equation}

\noindent Moreover, we can rewrite (\ref{lap1}) as

\begin{eqnarray}
\label{lap3}
\Delta \cosh \varphi &=& \sum_{i=1}^ng((\nabla_{E_i} A) \partial_t^\top, E_i) + \sum_{i=1}^ng(\nabla_{E_i} 
\partial_t^\top, A E_i) - \frac{f''(\tau)}{f(\tau)} \cosh \varphi \sinh^2 \varphi \\ \nonumber
& &  + 2 \frac{f'(\tau)^2}{f(\tau)^2} \cosh \varphi \sinh^2 \varphi + \frac{f'(\tau)}{f(\tau)} g(A 
\partial_t^\top, \partial_t^\top) \\ \nonumber
& & + \frac{f'(\tau)}{f(\tau)} \cosh \varphi \sum_{i=1}^ng(\nabla_{E_i} 
\partial_t^\top, E_i) \nonumber.
\end{eqnarray}

\noindent where we have used that $(\nabla_X A)Y = \nabla_X (AY) - A(\nabla_X Y)$ 
for all $X, Y \in \mathfrak{X}(M)$. Combining now (\ref{nt}),
Codazzi equation 
$\overline{g}(\overline{\mathrm{R}}(X,Y) N, Z) = 
\overline{g}((\nabla_Y A) X, Z) - \overline{g}((\nabla_X A) Y, Z)$ (where 
$\overline{\mathrm{R}}$ denotes the curvature tensor of 
$I \times_f F$) and the fact that $H=0$ 
we are able to obtain from (\ref{lap3})

\begin{eqnarray}
\label{lap4}
\Delta \cosh \varphi &=& \sum_{i=1}^n\overline{g}(\overline{\mathrm{R}}(\partial_t^\top, E_i)N, E_i) + \sum_{i=1}^ng((\nabla_{\partial_t^\top} A) E_i, E_i) + \frac{f'(\tau)}{f(\tau)} g(A \partial_t^\top, \partial_t^\top) \\ \nonumber 
& & + \cosh \varphi \ |A|^2  - \frac{f''(\tau)}{f(\tau)} \cosh \varphi \sinh^2 \varphi  + 3 \frac{f'(\tau)^2}{f(\tau)^2} \cosh \varphi \sinh^2 \varphi \\ \nonumber
& & + \frac{f'(\tau)}{f(\tau)} g(A \partial_t^\top, \partial_t^\top) + n \frac{f'(\tau)^2}{f(\tau)^2} \cosh \varphi,
\end{eqnarray}

\noindent where $|A|^2 = \mathrm{trace}(A^2)$. Since 
covariant derivations commute with contractions, if we 
choose our local frame in $T_pM$ satisfying $\left(\nabla_{E_j} E_i \right)_p = 0$, we have

 \begin{eqnarray}
\label{lap5}
\Delta \cosh \varphi &=& - \overline{\mathrm{Ric}}(\partial_t^\top, N) + 2 \frac{f'(\tau)}{f(\tau)} g(A \partial_t^\top, \partial_t^\top)  \\ \nonumber 
& & + \cosh \varphi \ |A|^2 - \frac{f''(\tau)}{f(\tau)} \cosh \varphi \sinh^2 \varphi \\ \nonumber
& & + \ \frac{f'(\tau)^2}{f(\tau)^2} \cosh \varphi (n + 3 \sinh^2 \varphi),
\end{eqnarray}

\noindent where $\overline{\mathrm{Ric}}$ is the Ricci tensor of 
$I \times_f F$. If we write $N$ as $N=N^F-\overline{g}(N,\partial_t)\partial_t$, where
$N^F$ is the projection of $N$ on the fiber $F$, we obtain from 
\cite[Cor. 7.43]{O'N} 

\begin{equation}
\label{rict}
\overline{\mathrm{Ric}}(\partial_t,\partial_t)=-n \frac{f''(\tau)}{f(\tau)}
\end{equation}

\noindent and 

\begin{equation}
\label{ricNF}
\overline{\mathrm{Ric}}(N^F,N^F)= \mathrm{Ric}^F( N^F,N^F) + \sinh^2\varphi \left(\frac{f''(\tau)}{f(\tau)}+(n-1)\frac{f'(\tau)^2}{f(\tau)^2} \right),
\end{equation}

\noindent being $\mathrm{Ric}^F$ the Ricci tensor of $F$. Hence, from (\ref{rict}) and (\ref{ricNF}) we conclude

\begin{equation}
\label{ritn}
\overline{\mathrm{Ric}}(\partial_t^\top, N) = -\cosh \varphi \left( \mathrm{Ric}^F( N^F,N^F) - (n-1) (\log f)''(\tau) \sinh^2 \varphi \right).
\end{equation}

\noindent Introducing (\ref{ritn}) in (\ref{lap5}) we have

\begin{eqnarray}
\label{lap6}
\Delta \cosh \varphi &=& \cosh \varphi \left( \mathrm{Ric}^F( N^F,N^F) - (n-1) (\log f)''(\tau) \sinh^2 \varphi \right) \\ \nonumber 
& & + 2 \ \frac{f'(\tau)}{f(\tau)} g(A \partial_t^\top, \partial_t^\top) + \cosh \varphi \ |A|^2 \\ \nonumber 
& &  - \frac{f''(\tau)}{f(\tau)} \cosh \varphi \sinh^2 \varphi + \ \frac{f'(\tau)^2}{f(\tau)^2} \cosh \varphi (n + 3 \sinh^2 \varphi).
\end{eqnarray}

\noindent Our following key step is to use \eqref{nt} to compute 
$|\mathrm{Hess}(\tau)|^2$, having

\begin{eqnarray}
\label{he1}
|\mathrm{Hess}(\tau)|^2 &=& \sum_{i=1}^ng(\nabla_{E_i} \partial_t^\top, \nabla_{E_i} \partial_t^\top) = 2 \ \frac{f'(\tau)}{f(\tau)} \cosh \varphi \ g(A \partial_t^\top, \partial_t^\top) \\ \nonumber
& & + \cosh^2 \varphi \ |A|^2 + \frac{f'(\tau)^2}{f(\tau)^2} \left( n + 2 \sinh^2 \varphi + \sinh^ 4 \varphi \right) \\ \nonumber
\end{eqnarray}

\noindent Multiplying \eqref{lap6} by $\cosh \varphi$ 
and inserting \eqref{he1} provides 

\begin{eqnarray}
\label{clap0}
\cosh \varphi \ \Delta \cosh \varphi &=& \cosh^2 \varphi \left( \mathrm{Ric}^F( N^F,N^F) - (n-1) (\log f)''(\tau) \sinh^2 \varphi \right) \\ \nonumber
& &  + |\mathrm{Hess}(\tau)|^2  - \frac{f'(\tau)^2}{f(\tau)^2} \left( n + 2 \sinh^2 \varphi + \sinh^ 4 \varphi \right) \\ \nonumber 
& & - \frac{f''(\tau)}{f(\tau)} \cosh^2 \varphi \sinh^2 \varphi +  \frac{f'(\tau)^2}{f(\tau)^2} \cosh^2 \varphi (n + 3 \sinh^2 \varphi). \\ \nonumber
\end{eqnarray}

\noindent Summing up, \eqref{la1} and \eqref{clap0} yield 
(compare with \cite[Prop. 3.1]{LR}).

\begin{lema}
\label{lemon}
Let $\psi: M \longrightarrow I \times_f F$ be a  
maximal hypersurface in a GRW spacetime. Then, 

\begin{eqnarray}
\label{clap1}
\Delta \sinh^2 \varphi &=& 2 \cosh^2 \varphi \left( \mathrm{Ric}^F( N^F,N^F) - (n-1) (\log f)''(\tau) \sinh^2 \varphi \right) \\ \nonumber
& &  + 2 \ |\mathrm{Hess}(\tau)|^2  - 2 \ (\log f)''(\tau) \ (1 + \sinh^2 \varphi) \sinh^2 \varphi  \\ \nonumber 
& & + 2 \ \frac{f'(\tau)^2}{f(\tau)^2} \ (n + \sinh^2 \varphi) \sinh^2 \varphi 
+ 2 \ |\nabla \cosh \varphi|^2. \\ \nonumber
\end{eqnarray}
\end{lema}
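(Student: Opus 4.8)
The plan is to read the lemma off directly from the two identities \eqref{la1} and \eqref{clap0} already established above, so that the only genuine task is an algebraic consolidation of the terms carrying the warping function. First I would recall \eqref{la1}, which expresses $\Delta \sinh^2 \varphi$ as $2 \cosh \varphi \, \Delta \cosh \varphi + 2 |\nabla \cosh \varphi|^2$; this reduces the computation to the single quantity $\cosh \varphi \, \Delta \cosh \varphi$, which is precisely what \eqref{clap0} supplies. Substituting \eqref{clap0} into \eqref{la1} and multiplying by the factor $2$ then produces an expression in which the terms $2 \cosh^2 \varphi \left( \mathrm{Ric}^F(N^F,N^F) - (n-1)(\log f)''(\tau) \sinh^2 \varphi \right)$, $2 |\mathrm{Hess}(\tau)|^2$ and $2 |\nabla \cosh \varphi|^2$ already appear in their final form, matching three of the four lines of \eqref{clap1}.

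Next I would isolate the remaining warping contribution, namely the three summands $-\frac{f'(\tau)^2}{f(\tau)^2}(n + 2\sinh^2\varphi + \sinh^4\varphi)$, $-\frac{f''(\tau)}{f(\tau)} \cosh^2\varphi \sinh^2\varphi$ and $\frac{f'(\tau)^2}{f(\tau)^2} \cosh^2\varphi (n + 3\sinh^2\varphi)$ coming from \eqref{clap0}, and rewrite them with the two elementary identities $\cosh^2\varphi = 1 + \sinh^2\varphi$ and $(\log f)''(\tau) = \frac{f''(\tau)}{f(\tau)} - \frac{f'(\tau)^2}{f(\tau)^2}$. Expanding the factors $\cosh^2\varphi$ and collecting the coefficient of $\frac{f''(\tau)}{f(\tau)}$ against that of $\frac{f'(\tau)^2}{f(\tau)^2}$ should group the pure second-derivative part into $-(\log f)''(\tau)(1 + \sinh^2\varphi)\sinh^2\varphi$, leaving a residual first-derivative term equal to $\frac{f'(\tau)^2}{f(\tau)^2}(n + \sinh^2\varphi)\sinh^2\varphi$. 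Multiplying this consolidated block by $2$ reproduces the two remaining lines of \eqref{clap1}.

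The step I expect to demand the most care is this bookkeeping, since it is purely a matter of matching coefficients of $\sinh^2\varphi$ and $\sinh^4\varphi$ across the three summands: one must check that after substituting $\cosh^2\varphi = 1 + \sinh^2\varphi$ the net coefficient of $\frac{f'(\tau)^2}{f(\tau)^2}$ is $(n+1)\sinh^2\varphi + 2\sinh^4\varphi$ and that of $\frac{f''(\tau)}{f(\tau)}$ is $-(\sinh^2\varphi + \sinh^4\varphi)$, which is exactly what the claimed form $-(\log f)''(\tau)(1+\sinh^2\varphi)\sinh^2\varphi + \frac{f'(\tau)^2}{f(\tau)^2}(n+\sinh^2\varphi)\sinh^2\varphi$ encodes. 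There is no conceptual difficulty beyond this verification; once the coefficients are confirmed, combining the consolidated block with the terms carried over verbatim from \eqref{la1} and \eqref{clap0} completes the proof.
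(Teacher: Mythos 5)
Your proposal is correct and follows exactly the same route as the paper, which obtains \eqref{clap1} simply by substituting \eqref{clap0} into \eqref{la1} and consolidating the warping-function terms via $\cosh^2\varphi = 1+\sinh^2\varphi$ and $(\log f)'' = f''/f - (f'/f)^2$. Your coefficient check, namely that the net coefficient of $f'(\tau)^2/f(\tau)^2$ is $(n+1)\sinh^2\varphi + 2\sinh^4\varphi$ and that of $f''(\tau)/f(\tau)$ is $-(\sinh^2\varphi + \sinh^4\varphi)$, is exactly right and matches the claimed form.
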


Let us now introduce an energy condition that will be key to obtain our main results. Recall 
that a spacetime obeys the Null Convergence Condition (NCC) if 

$$ \overline{{\rm Ric}}(Z, Z) \geq 0,$$

\noindent for all lightlike vector $Z$. This energy condition holds in any 
spacetime satisfying Einstein's equation with a 
stress-energy tensor that obeys the weak energy condition and mathematically codifies 
gravity's attractive nature \cite[p. 95]{HE}. From \cite[Cor. 7.43]{O'N}, we can see that a 
GRW spacetime obeys the NCC if and only if

\begin{equation}
\label{ncc12}
\mathrm{Ric}^F(X,X) - (n-1) f(t)^2 (\log f)''(t) g_F(X, X) \geq 0 ,
\end{equation}

\noindent for all $X$ tangent to the fiber $F$. We are now in 
a position to state our main result, which can be regarded as 
an extension of  \cite[Thm. 8]{PRR2} and \cite[Thm. 10]{RRS3} 
to the stochastically complete case.

\begin{teor}
\label{teomon}

Let $\psi: M \longrightarrow I \times_f F$ be a stochastically complete 
maximal hypersurface in a GRW spacetime satisfying the NCC. If

\begin{equation}
\label{suplog}
\sup_M \ (\log f)''(\tau) < 0 , 
\end{equation}

\noindent and the hyperbolic angle is bounded, then $M$ is contained 
in a totally geodesic spacelike slice.
\end{teor}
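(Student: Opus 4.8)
The plan is to show that the bounded nonnegative function $u := \sinh^2\varphi$ satisfies a differential inequality of the form $\Delta u \geq \lambda u$ for some constant $\lambda > 0$, and then to invoke the characterization of stochastic completeness in Lemma \ref{lemastco}(ii) to force $u \equiv 0$. Once $\sinh^2\varphi \equiv 0$, the hyperbolic angle vanishes identically and $M$ must lie in a spacelike slice, after which maximality will pin down the geodesic character of that slice. The main computational input is already in hand, namely the expression for $\Delta\sinh^2\varphi$ supplied by Lemma \ref{lemon}, so the work is essentially a sign analysis followed by a maximum-principle argument.

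First I would examine the sign of each summand on the right-hand side of \eqref{clap1}. The terms $2\,|\mathrm{Hess}(\tau)|^2$, $2\,\frac{f'(\tau)^2}{f(\tau)^2}(n+\sinh^2\varphi)\sinh^2\varphi$ and $2\,|\nabla\cosh\varphi|^2$ are manifestly nonnegative. For the first term the key is to recognize, from $N = N^F - \overline{g}(N,\partial_t)\partial_t = N^F + \cosh\varphi\,\partial_t$ together with $\overline{g}(N,N)=-1$, that $\overline{g}(N^F,N^F)=\sinh^2\varphi$, whence $g_F(N^F,N^F)=\sinh^2\varphi/f(\tau)^2$. Substituting this into the NCC characterization \eqref{ncc12} applied to the fiber vector $N^F$ gives exactly
$$\mathrm{Ric}^F(N^F,N^F)-(n-1)(\log f)''(\tau)\,\sinh^2\varphi \geq 0,$$
so the entire first term is nonnegative. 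Finally, setting $\delta := -\sup_M(\log f)''(\tau) > 0$, which is positive by hypothesis \eqref{suplog}, the third summand obeys
$$-2\,(\log f)''(\tau)\,(1+\sinh^2\varphi)\sinh^2\varphi \geq 2\delta\,(1+\sinh^2\varphi)\sinh^2\varphi \geq 2\delta\,\sinh^2\varphi.$$

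Combining these estimates, every summand of \eqref{clap1} is nonnegative and one of them dominates $2\delta\,u$, so $\Delta u \geq 2\delta\,u$ on $M$. Since the hyperbolic angle is bounded, $u=\sinh^2\varphi$ is a bounded, nonnegative, smooth function (smoothness follows from $\sinh^2\varphi=\cosh^2\varphi-1$ and the smoothness of $\cosh\varphi=-\overline{g}(N,\partial_t)$, or equivalently from \eqref{se2vp}). Stochastic completeness together with Lemma \ref{lemastco}(ii), applied with $\lambda=2\delta$, then forces $u\equiv 0$, that is, $\varphi\equiv 0$. By \eqref{se2vp} this gives $\nabla\tau\equiv 0$, so $\tau\equiv t_0$ and $M$ is contained in the spacelike slice $\{t_0\}\times F$. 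As this slice has constant mean curvature $f'(t_0)/f(t_0)$ and $M$ is maximal, we obtain $f'(t_0)=0$, so the slice is totally geodesic, which is the desired conclusion.

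The step I expect to be the main obstacle is the sign analysis of the first term: the NCC in \eqref{ncc12} is phrased in terms of $g_F$ and an arbitrary fiber vector, so one must carry out the conformal rescaling correctly to identify $g_F(N^F,N^F)=\sinh^2\varphi/f(\tau)^2$ and match the coefficient $(n-1)f^2(\log f)''$ against the factor $(n-1)(\log f)''\sinh^2\varphi$ appearing in Lemma \ref{lemon}. A secondary but essential point is that the \emph{strict} inequality $\sup_M(\log f)''(\tau)<0$ is exactly what produces the positive constant $\delta$, hence $\lambda>0$; without a uniform negative upper bound, subharmonicity of $u$ alone would not suffice to conclude on a manifold that is merely stochastically complete rather than parabolic.
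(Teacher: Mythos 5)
Your proof is correct and takes essentially the same route as the paper's: both start from Lemma \ref{lemon}, use the NCC characterization \eqref{ncc12} to discard the Ricci term, use the strict bound \eqref{suplog} to produce a uniform constant and hence the differential inequality $\Delta \sinh^2\varphi \geq 2\lambda \sinh^2\varphi$, and then conclude $\varphi \equiv 0$ from stochastic completeness via Lemma \ref{lemastco}, with maximality forcing the slice to be totally geodesic. Your write-up is in fact more detailed than the paper's, which passes directly to the inequality $\Delta \sinh^2\varphi \geq 2\lambda(1+\sinh^2\varphi)\sinh^2\varphi$ without spelling out the rescaling $g_F(N^F,N^F)=\sinh^2\varphi/f(\tau)^2$ that you verify explicitly.
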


\begin{proof}

Let $\sup_M (\log)''(\tau) = -\lambda < 0$, for some 
$\lambda >0$. Since the NCC is satisfied, from 
\eqref{clap1} we have 

\begin{equation}
\label{lasi}
\Delta \sinh^2 \varphi \geq 2 \lambda (1 + \sinh^2 \varphi) \sinh^2 \varphi.
\end{equation}

If $M$ is stochastically complete and $\sup_M \varphi \leq + \infty$, 
we obtain from \eqref{lasi} and Lemma \ref{lemastco} 
that $\varphi = 0$. Therefore, $M$ is 
contained in a spacelike slice. Moreover, since $M$ is maximal, this 
slice must be totally geodesic.
\end{proof}

\begin{rem}
\label{rema1}
\normalfont
Note that assumption \eqref{suplog} in Theorem \ref{teomon} cannot be 
omitted, being the spacelike hyperplanes in $\mathbb{L}^{n+1}$ (even 
if we remove a compact subset from them \cite[Cor. 6.5]{Gr})
examples of stochastically complete maximal hypersurfaces 
with bounded hyperbolic angle in a GRW 
spacetime that satisfies the NCC which are not contained in a 
spacelike slice. 
\end{rem}

\begin{rem}
\normalfont
The bound on the hyperbolic angle has an interesting physical 
interpretation. Indeed, on any point of a spacelike hypersurface 
$p \in M$ there are 
two distinguished instantaneous 
observers: $N (p)$ and $\partial_t (p)$. Using the decomposition 
$N = \cosh \varphi \ \partial_t  + N^F $ we see that 
the boundedness of the hyperbolic angle implies that 
the Newtonian speed of $N$ measured by $\partial_t$, 
defined by $v:= \tanh \varphi$ does not approach the speed of light 
in vacuum \cite[p. 45]{SW}.
\end{rem}

As a direct consequence of Theorem \ref{teomon} we can prove the 
next rigidity result in the open region of anti-de Sitter spacetime 
modeled by  
$(-\pi/2, \pi/2) \times_{\cos t} \mathbb{H}^n$ (see \cite[Sec. 4]{Mo}).

\begin{coro}
\label{corads}
Let $\psi: M \longrightarrow (-\pi/2, \pi/2) \times_{\cos t} \mathbb{H}^n$ 
be a stochastically complete maximal hypersurface with bounded hyperbolic 
angle. Then, $M$ is contained in the totally geodesic spacelike slice 
$\{0\} \times \mathbb{H}^n$.
\end{coro}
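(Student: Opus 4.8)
The plan is to apply Theorem \ref{teomon} directly, so the entire task reduces to verifying that the specific spacetime $(-\pi/2, \pi/2) \times_{\cos t} \mathbb{H}^n$ satisfies the hypotheses of that theorem. First I would identify the warping function as $f(t) = \cos t$ on the interval $I = (-\pi/2, \pi/2)$, on which $f > 0$ as required for a GRW spacetime. Then I would compute $(\log f)'(t) = -\tan t$ and $(\log f)''(t) = -\sec^2 t = -1/\cos^2 t$, which is strictly negative throughout $I$.

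Next I would check the two key conditions in Theorem \ref{teomon}. For the condition $\sup_M (\log f)''(\tau) < 0$ from \eqref{suplog}, I would note that since $\tau$ takes values in a subset of $(-\pi/2, \pi/2)$, we have $(\log f)''(\tau) = -\sec^2 \tau \leq -1 < 0$ everywhere on $M$, so the supremum is bounded above by $-1 < 0$. For the NCC, the fiber is the hyperbolic space $\mathbb{H}^n$ with constant sectional curvature $-1$, so $\mathrm{Ric}^F(X,X) = -(n-1) g_F(X,X)$. Plugging into the characterization \eqref{ncc12}, the NCC requires $-(n-1) g_F(X,X) - (n-1)\cos^2 t \cdot (-\sec^2 t) g_F(X,X) = -(n-1)g_F(X,X) + (n-1)g_F(X,X) = 0 \geq 0$, which holds with equality. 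Thus the NCC is satisfied.

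With both hypotheses verified and the hyperbolic angle bounded by assumption, Theorem \ref{teomon} immediately gives that $M$ is contained in a totally geodesic spacelike slice. The final step is to pin down \emph{which} slice: a slice $\{t_0\} \times \mathbb{H}^n$ is totally geodesic (equivalently maximal) precisely when $f'(t_0) = 0$, and since $f'(t) = -\sin t$ vanishes on $I$ only at $t_0 = 0$, the unique totally geodesic slice is $\{0\} \times \mathbb{H}^n$, completing the proof.

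There is essentially no obstacle here since the result is a corollary; the only point requiring care is confirming that the NCC holds \emph{with equality} rather than failing, which follows from the exact cancellation between the hyperbolic fiber's Ricci curvature and the warping term. I would make sure to emphasize that the constant curvature $-1$ of $\mathbb{H}^n$ is exactly what makes the cancellation work, so that no strict positivity of the Ricci-type quantity is needed—only the nonnegativity demanded by the NCC.
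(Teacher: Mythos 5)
Your proposal is correct and is exactly the argument the paper intends: the corollary is stated as a direct consequence of Theorem \ref{teomon}, and your verification that $(\log f)''(t)=-\sec^2 t\leq -1<0$, that the NCC \eqref{ncc12} holds with equality due to the cancellation $\mathrm{Ric}^{\mathbb{H}^n}=-(n-1)g_F$ against $-(n-1)\cos^2 t\,(\log f)''(t)g_F$, and that $f'(t)=-\sin t$ vanishes on $(-\pi/2,\pi/2)$ only at $t_0=0$ is precisely what is needed. No gaps; the identification of the unique totally geodesic slice $\{0\}\times\mathbb{H}^n$ is the right finishing touch.
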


Furthermore, we can obtain certain non existence results for 
stochastically complete maximal 
hypersurfaces with bounded hyperbolic angle in a wide class 
of GRW spacetimes.

\begin{teor}
\label{teononex}
There are no stochastically complete maximal hypersurfaces with bounded 
hyperbolic angle in a GRW specetime that satisfies the NCC 
with $(\log f)'' \leq~ 0$ and $\inf (f'/f)^2 >~ 0$. 
\end{teor}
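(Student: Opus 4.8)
The plan is to convert the Bochner-type identity of Lemma~\ref{lemon} into a differential inequality of the form $\Delta\sinh^2\varphi\geq\lambda\,\sinh^2\varphi$ with a \emph{strictly} positive constant $\lambda$, and then to run the stochastic-completeness argument exactly as in Theorem~\ref{teomon}. The decisive difference is that here, where we only assume $(\log f)''\leq 0$ rather than $\sup_M(\log f)''(\tau)<0$, the strict positivity of $\lambda$ must come from the warping-gradient term $f'^2/f^2$ instead of from $(\log f)''$. First I would examine each of the five summands on the right-hand side of \eqref{clap1} and verify that, under the present hypotheses, all of them are nonnegative.

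For the first summand, recall that writing $N=\cosh\varphi\,\partial_t+N^F$ and using $\overline{g}(N,N)=-1$ gives $f(\tau)^2 g_F(N^F,N^F)=\sinh^2\varphi$; substituting this into the NCC inequality \eqref{ncc12} with $X=N^F$ yields precisely $\mathrm{Ric}^F(N^F,N^F)-(n-1)(\log f)''(\tau)\sinh^2\varphi\geq 0$. The Hessian term $2|\mathrm{Hess}(\tau)|^2$ and the gradient term $2|\nabla\cosh\varphi|^2$ are trivially nonnegative, while the hypothesis $(\log f)''\leq 0$ makes $-2(\log f)''(\tau)(1+\sinh^2\varphi)\sinh^2\varphi\geq 0$. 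Discarding these four nonnegative contributions leaves
$$\Delta\sinh^2\varphi\geq 2\,\frac{f'(\tau)^2}{f(\tau)^2}\,(n+\sinh^2\varphi)\sinh^2\varphi\geq 2n\left(\inf(f'/f)^2\right)\sinh^2\varphi,$$
so that, setting $\lambda:=2n\inf(f'/f)^2$, the assumption $\inf(f'/f)^2>0$ gives $\Delta\sinh^2\varphi\geq\lambda\sinh^2\varphi$ with $\lambda>0$.

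Since the hyperbolic angle is bounded, $u:=\sinh^2\varphi$ is a nonnegative bounded $C^2$ function, so Lemma~\ref{lemastco}(ii) applied to $\Delta u\geq\lambda u$ forces $u\equiv 0$, that is $\varphi\equiv 0$. Then \eqref{se2vp} gives $|\nabla\tau|^2\equiv 0$, so $\tau\equiv t_0$ is constant and $M$ is contained in the slice $\{t_0\}\times F$; by maximality this slice is totally geodesic, whence $f'(t_0)=0$. This contradicts $\inf(f'/f)^2>0$, which forces $(f'/f)(t_0)\neq 0$. Hence no such hypersurface can exist, as claimed.

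The only delicate step is the sign bookkeeping of the second paragraph: identifying the first summand of \eqref{clap1} with the NCC quantity through the relation $f^2 g_F(N^F,N^F)=\sinh^2\varphi$, and recognizing that it is exactly the warping-gradient term—the one that plays no role in the strict inequality of Theorem~\ref{teomon}—which supplies the positive lower bound $\lambda$ here. Everything else is a routine assembly of the earlier lemmas, and I expect no substantial obstacle beyond this.
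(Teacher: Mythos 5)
Your proof is correct and takes essentially the same route as the paper's: under the NCC and $(\log f)''\leq 0$ all other terms in \eqref{clap1} are nonnegative, leaving $\Delta \sinh^2 \varphi \geq 2\,\inf(f'/f)^2\,(n+\sinh^2\varphi)\sinh^2\varphi$, after which stochastic completeness and the bounded hyperbolic angle force $\varphi\equiv 0$ via Lemma \ref{lemastco}, contradicting $\inf(f'/f)^2>0$ because a totally geodesic slice requires $f'(t_0)=0$. Your write-up merely makes explicit the sign bookkeeping and the final contradiction step that the paper leaves implicit.
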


\begin{proof}

Let us assume the existence of such a hypersurface. Using these 
assumptions in \eqref{clap1} and calling 
$\inf (f'/f)^2 = \alpha > 0$ we obtain

\begin{equation}
\label{lasi2}
\Delta \sinh^2 \varphi \geq 2 \alpha (n + \sinh^2 \varphi) \sinh^2 \varphi.
\end{equation}

If $M$ is stochastically complete and $\sup_M \varphi \leq + \infty$, 
we obtain from \eqref{lasi2} and Lemma \ref{lemastco} that $M$ should be 
contained in a totally geodesic spacelike slice, 
reaching a contradiction.
\end{proof}

Theorem \ref{teononex} yields interesting consequences for 
the steady state spacetime proposed by Bondi, Gold and Hoyle 
\cite[p. 126]{HE}, 
which can be described by the Robertson-Walker 
spacetime $\mathbb{R} \times_{e^t} \mathbb{R}^n$.

\begin{coro}
\label{coroste}
There are no stochastically complete maximal hypersurfaces with 
bounded hyperbolic angle in the steady state spacetime 
$\mathbb{R} \times_{e^t} \mathbb{R}^n$.
\end{coro}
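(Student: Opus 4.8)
The plan is to derive Corollary \ref{coroste} as an immediate application of Theorem \ref{teononex} by verifying that the steady state spacetime $\mathbb{R} \times_{e^t} \mathbb{R}^n$ satisfies all three hypotheses of that theorem. First I would identify the warping function as $f(t) = e^t$ and the fiber as $(\mathbb{R}^n, g_{\mathrm{eucl}})$, which is flat and hence has $\mathrm{Ric}^F \equiv 0$. A straightforward computation then gives $\log f(t) = t$, so $(\log f)'(t) = 1$ and $(\log f)''(t) = 0$. In particular, $(\log f)'' \leq 0$ holds (with equality everywhere), confirming the first of the two differential conditions on $f$.

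Next I would check the NCC via the criterion \eqref{ncc12}. Since $\mathrm{Ric}^F \equiv 0$ and $(\log f)'' \equiv 0$, the left-hand side of \eqref{ncc12} is identically zero, so the inequality holds (trivially) and the spacetime obeys the Null Convergence Condition. I would then verify the nondegeneracy condition $\inf (f'/f)^2 > 0$: here $f'(t)/f(t) = e^t/e^t = 1$, so $(f'/f)^2 \equiv 1$ and therefore $\inf_{\mathbb{R}} (f'/f)^2 = 1 > 0$. With all three hypotheses of Theorem \ref{teononex} in place, the conclusion follows directly, namely that no stochastically complete maximal hypersurface with bounded hyperbolic angle can exist in $\mathbb{R} \times_{e^t} \mathbb{R}^n$.

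I do not expect any genuine obstacle here, since every hypothesis is verified by elementary computation with the explicit warping function $e^t$. The only point requiring a small remark is that the two conditions $(\log f)'' \leq 0$ and $\inf(f'/f)^2 > 0$ are simultaneously compatible precisely because the exponential warping function makes $(\log f)''$ vanish identically while keeping $f'/f$ bounded away from zero; this is exactly the borderline situation that Theorem \ref{teononex} is designed to exploit, in contrast to the strict inequality $\sup_M (\log f)''(\tau) < 0$ demanded in Theorem \ref{teomon}. Thus the steady state spacetime falls under the nonexistence regime rather than the rigidity regime, and the corollary is obtained without any further analytic input.
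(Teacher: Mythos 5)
Your proposal is correct and follows exactly the paper's route: the corollary is stated there as a direct consequence of Theorem \ref{teononex} applied to $\mathbb{R}\times_{e^t}\mathbb{R}^n$, and your verification that $f(t)=e^t$ gives $(\log f)''\equiv 0$, $\mathrm{Ric}^F\equiv 0$ (so the NCC criterion \eqref{ncc12} holds with equality) and $\inf(f'/f)^2=1>0$ is precisely the computation left implicit in the paper. Your closing remark correctly identifies why this example sits in the nonexistence regime of Theorem \ref{teononex} rather than the rigidity regime of Theorem \ref{teomon}.
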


Moreover, for product spacetimes we have. 

\begin{teor}
\label{teopro}
Let $\psi: M \longrightarrow I \times F$ be a stochastically complete 
maximal hypersurface in a product spacetime whose fiber's 
Ricci curvature satisfies $\mathrm{Ric}^F \geq \beta g_F $ for 
$\beta >0$. If 
the hyperbolic angle of $M$ is bounded, then $M$ 
is contained in a spacelike slice.
\end{teor}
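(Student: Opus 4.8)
The plan is to specialize the general Laplacian formula \eqref{clap1} to the product case, where $f \equiv 1$ and hence $f' \equiv 0$ and $(\log f)'' \equiv 0$. First I would observe that under these simplifications, the last two curvature-type terms in \eqref{clap1} involving $f'(\tau)/f(\tau)$ and $(\log f)''(\tau)$ vanish identically, so the formula collapses dramatically to
$$\Delta \sinh^2 \varphi = 2 \cosh^2 \varphi \ \mathrm{Ric}^F(N^F, N^F) + 2 |\mathrm{Hess}(\tau)|^2 + 2 |\nabla \cosh \varphi|^2.$$
This is the key structural observation: in the product case the right-hand side is a manifestly nonnegative expression once we use the Ricci lower bound on the fiber.

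Next I would exploit the hypothesis $\mathrm{Ric}^F \geq \beta g_F$ with $\beta > 0$. Applying this to the fiber-tangent vector $N^F$ gives $\mathrm{Ric}^F(N^F, N^F) \geq \beta \, g_F(N^F, N^F)$. The natural step is to relate $g_F(N^F, N^F)$ to $\sinh^2 \varphi$. Using the decomposition $N = N^F - \overline{g}(N,\partial_t)\partial_t = N^F + \cosh\varphi\,\partial_t$ together with $\overline{g}(N,N) = -1$ and the warped-product metric (now a plain product metric), one computes $g_F(N^F, N^F) = \sinh^2 \varphi$, which is consistent with \eqref{se2vp} since $|\nabla\tau|^2 = \sinh^2\varphi$ and $\nabla\tau = -\partial_t^\top = -N^F$ in the tangential decomposition. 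Substituting this bound, and discarding the two remaining nonnegative terms $2|\mathrm{Hess}(\tau)|^2 \geq 0$ and $2|\nabla \cosh\varphi|^2 \geq 0$, yields the differential inequality
$$\Delta \sinh^2 \varphi \geq 2 \beta \cosh^2 \varphi \ \sinh^2 \varphi \geq 2 \beta \sinh^2 \varphi,$$
where the last step uses $\cosh^2 \varphi \geq 1$.

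Finally, I would invoke stochastic completeness exactly as in the proofs of Theorems \ref{teomon} and \ref{teononex}. Since the hyperbolic angle is bounded, the function $u = \sinh^2 \varphi$ is a nonnegative bounded $C^2$ function satisfying $\Delta u \geq 2\beta u$ with $2\beta > 0$. By the equivalence $(i) \Leftrightarrow (ii)$ in Lemma \ref{lemastco}, the only such solution on a stochastically complete manifold is $u \equiv 0$, so $\sinh^2 \varphi \equiv 0$, forcing $\varphi \equiv 0$. As noted in the preliminaries, a spacelike hypersurface in $I \times_f F$ has identically vanishing hyperbolic angle if and only if it is contained in a spacelike slice, which gives the conclusion.

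The main obstacle I anticipate is purely verifying the collapse of \eqref{clap1} and confirming the identity $g_F(N^F, N^F) = \sinh^2\varphi$ with the correct warping-factor bookkeeping; once the inequality $\Delta \sinh^2\varphi \geq 2\beta \sinh^2\varphi$ is in hand, the stochastic-completeness argument is immediate. It is worth remarking that, unlike Theorem \ref{teomon}, one does not need the NCC as a separate hypothesis here: in the product case the NCC reduces to $\mathrm{Ric}^F \geq 0$, which is already subsumed by the strictly positive lower bound $\mathrm{Ric}^F \geq \beta g_F$, and moreover the conclusion is a spacelike slice rather than necessarily a totally geodesic one (in a product spacetime every slice is automatically totally geodesic since $f' \equiv 0$).
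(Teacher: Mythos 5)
Your proposal is correct and takes essentially the same route as the paper's proof: specialize \eqref{clap1} to $f \equiv 1$, use $\mathrm{Ric}^F(N^F,N^F) \geq \beta \, g_F(N^F,N^F) = \beta \sinh^2\varphi$, discard the nonnegative terms $2|\mathrm{Hess}(\tau)|^2$ and $2|\nabla\cosh\varphi|^2$ to obtain $\Delta \sinh^2\varphi \geq 2\beta\cosh^2\varphi\,\sinh^2\varphi$ (the paper's inequality \eqref{lassi}), and conclude via Lemma \ref{lemastco} that $\varphi \equiv 0$. One inessential slip: your parenthetical claim $\nabla\tau = -\partial_t^\top = -N^F$ is false as a vector identity (in the product case one has $\partial_t^\top = -\sinh^2\varphi\,\partial_t - \cosh\varphi\,N^F$), but your argument only uses the norm identity $g_F(N^F,N^F) = \sinh^2\varphi$, which you derive correctly from $N = N^F + \cosh\varphi\,\partial_t$ and $\overline{g}(N,N) = -1$.
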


\begin{proof}
If the above bound on the Ricci curvature of the fiber 
holds, \eqref{clap1} yields

\begin{equation}
\label{lassi}
\Delta \sinh^2 \varphi \geq 2 \beta (1 + \sinh^2 \varphi) \sinh^2 \varphi .
\end{equation}

If $M$ is stochastically complete and $\sup_M \varphi \leq + \infty$, 
\eqref{lassi} and Lemma \ref{lemastco} allow us to conclude that 
$M$ is contained in a spacelike slice.
\end{proof}

\begin{rem}
\normalfont
For product spacetimes the positivity of the fiber's Ricci curvature 
is key in order to obtain these uniqueness results. Indeed, for the case 
of flat fiber the spacelike hyperplanes in $\mathbb{L}^{n+1}$ previously 
mentioned in Remark \ref{rema1} show how these uniqueness result do not 
hold. 

In addition, if the fiber has negative Ricci curvature, we can consider 
in the Lorentzian product spacetime 
$\mathbb{R}\times \mathbb{H}^2$ the graph of the function 
$u(x_1, x_2) = \frac{1}{3} \log(x_1^2 + x_2^2)$, $(x_1, x_2) \in \mathbb{H}^2$, which defines a complete maximal spacelike 
hypersurface with bounded hyperbolic angle (see \cite{Alb}).

Moreover, choosing a local orthonormal reference 
frame $\{E_1, E_2\}$ on the surface and using the Gauss equation 
the Ricci curvature of this maximal surface satisfies 

\begin{equation}
\label{rimi1}
\mathrm{Ric}(X, X) = \sum_{i=1}^2 \overline{g}(\overline{\mathrm{R}}(X, E_i) E_i, X) + |A X|^2,
\end{equation}
 
\noindent for all $X \in \mathfrak{X}(\Sigma_u)$, where 
$\overline{\mathrm{R}}$ denotes the curvature tensor of $\mathbb{R} \times \mathbb{H}^2$. Using 
\cite[Prop. 7.42]{O'N} we obtain after several computations

\begin{equation}
\label{rimi2}
\mathrm{Ric}(X, X) = - \cosh^2 \varphi \ |X|^2 + |A X|^2.
\end{equation}

Therefore, since the hyperbolic angle is bounded, the Ricci curvature 
is bounded from below. This fact together with the completeness ensures 
that this maximal graph is stochastically complete 
due to Lemma \ref{teooy}.
\end{rem}

As a direct consequence of Theorem \ref{teopro} we obtain the following 
result in Einstein's static 
universe $\mathbb{R} \times \mathbb{S}^n$ \cite[p. 121]{HE}.

\begin{coro}
\label{corosta}
Let $\psi: M \longrightarrow \mathbb{R} \times \mathbb{S}^n$ be a 
stochastically complete maximal hypersurface with bounded hyperbolic 
angle in Einstein's static universe. Then, $M$ is contained in 
a spacelike slice $\{t_0\} \times \mathbb{S}^n$, $t_0 \in \mathbb{R}$.
\end{coro}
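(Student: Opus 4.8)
The plan is to recognize that Corollary \ref{corosta} is an immediate specialization of Theorem \ref{teopro}, so the entire task reduces to checking that Einstein's static universe $\mathbb{R} \times \mathbb{S}^n$ satisfies the hypotheses of that theorem. First I would note that this spacetime is indeed a product spacetime, i.e. a GRW spacetime $I \times_f F$ with $I = \mathbb{R}$, fiber $F = \mathbb{S}^n$ and constant warping function $f \equiv 1$; in particular the slices $\{t_0\} \times \mathbb{S}^n$ are exactly the spacelike slices appearing in the conclusion, so that the product version Theorem \ref{teopro} (rather than the warped-product Theorem \ref{teomon}) is the applicable statement.

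The only point requiring a short computation is to verify the strictly positive lower bound $\mathrm{Ric}^F \geq \beta g_F$ on the fiber. Since $F = \mathbb{S}^n$ is the round unit sphere, it has constant sectional curvature $1$, whence its Ricci tensor satisfies $\mathrm{Ric}^{\mathbb{S}^n} = (n-1) g_{\mathbb{S}^n}$. Because $n \geq 2$ throughout the paper, the constant $\beta := n-1$ is strictly positive, and the hypothesis of Theorem \ref{teopro} is met with this choice of $\beta$.

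With these two observations in place, I would simply invoke Theorem \ref{teopro}: a stochastically complete maximal hypersurface with bounded hyperbolic angle in this product spacetime must be contained in a spacelike slice, which here is precisely some $\{t_0\} \times \mathbb{S}^n$ with $t_0 \in \mathbb{R}$, giving the claim. There is essentially no obstacle to overcome, since all the analytic content is already carried by Theorem \ref{teopro} and, through it, by the Bochner-type identity of Lemma \ref{lemon} together with the weak maximum principle characterization in Lemma \ref{lemastco}. The only care needed is the bookkeeping just described: recording that $f \equiv 1$ makes the ambient a genuine product and that $n \geq 2$ forces $\beta > 0$, which is what distinguishes the sphere from the flat- or negative-fiber cases where the conclusion can fail.
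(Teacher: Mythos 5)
Your proposal is correct and follows exactly the paper's route: the corollary is stated there as a direct consequence of Theorem \ref{teopro}, applied to the product spacetime $\mathbb{R} \times \mathbb{S}^n$ whose fiber satisfies $\mathrm{Ric}^{\mathbb{S}^n} = (n-1)\, g_{\mathbb{S}^n}$ with $n-1 > 0$. Your verification of the hypothesis $\mathrm{Ric}^F \geq \beta g_F$ with $\beta = n-1$ is precisely the (implicit) content of the paper's deduction.
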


\subsection{Lorentz-Minkowski and de Sitter spacetimes}
\label{suselm}

Notice that our previous results do 
not provide any information 
about stochastically complete maximal hypersurfaces in two distinguished 
Robertson-Walker models. Namely,  
Lorentz-Minkowski spacetime $\mathbb{L}^{n+1}$ and de Sitter 
spacetime $\mathbb{S}_1^{n+1}$ (which can be regarded as 
the Robertson-Walker 
$\mathbb{R} \times_{\cosh(t)} \mathbb{S}^n$). These two 
models also belong to the relevant 
class of Lorentzian spacetimes with constant 
sectional curvature. Indeed, 
using the following well known 
Simons-type formula for maximal hypersurfaces in 
Lorentzian space forms (see, for example, \cite[Lemma 9.7]{AMR}) 
we can obtain a new rigidity result for 
stochastically complete maximal hypersurfaces in these 
ambient spacetimes. 

\begin{lema}
Let $\psi: M^n \longrightarrow \overline{M}^{n+1}$ be 
a maximal hypersurface in 
a spacetime $\overline{M}$ of constant sectional 
curvature $\overline{c}$. Then

\begin{equation}
\label{sim}
\frac{1}{2} \Delta |A|^2 = |\nabla A |^2 + (n \overline{c} + |A|^2 ) |A|^2,
\end{equation}

\noindent where $A$ is the shape operator of the maximal hypersurface.
\end{lema}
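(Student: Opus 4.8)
The plan is to prove this via a Bochner--Weitzenb\"ock expansion of $\Delta |A|^2$, reducing the whole computation to identifying the rough Laplacian of the shape operator $A$ (regarded as a symmetric $(1,1)$-tensor) and then feeding in the Codazzi and Gauss equations of the immersion into the space form $\overline{M}$.

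First I would establish the purely algebraic Bochner identity
$$
\tfrac{1}{2}\Delta |A|^2 = |\nabla A|^2 + \mathrm{trace}(A \circ \Delta A),
$$
where $\Delta A := \sum_i \big(\nabla_{E_i}\nabla_{E_i}A - \nabla_{\nabla_{E_i}E_i}A\big)$ is the rough Laplacian of $A$ and $\{E_1,\dots,E_n\}$ is a local orthonormal frame on $M$. This follows by writing $|A|^2 = \mathrm{trace}(A^2)$, differentiating twice and contracting, exactly in the spirit of the frame computation of $\Delta \cosh\varphi$ carried out earlier in the paper (choosing the frame geodesic at the point so that $(\nabla_{E_j}E_i)_p = 0$).

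The heart of the argument is to evaluate $\mathrm{trace}(A\circ \Delta A)$. Since $\overline{M}$ has constant sectional curvature $\overline{c}$, its curvature tensor is $\overline{\mathrm{R}}(X,Y)Z = \overline{c}\,(\overline{g}(Y,Z)X - \overline{g}(X,Z)Y)$. In particular $\overline{\mathrm{R}}(X,Y)N$ vanishes for $X,Y$ tangent to $M$, so the Codazzi equation recalled above collapses to $(\nabla_X A)Y = (\nabla_Y A)X$; that is, $\nabla A$ is totally symmetric. I would then commute the two covariant derivatives inside $\Delta A$ using this symmetry together with the Ricci commutation identity, which trades $\nabla_{E_i}\nabla_{E_i}A$ for a Hessian-of-trace term plus a curvature commutator of the form $R(E_i,\cdot)\circ A - A\circ R(E_i,\cdot)$, where $R$ is the intrinsic curvature of $M$. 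Because $M$ is maximal, $\mathrm{trace}(A)\equiv 0$ and the Hessian-of-trace term disappears entirely.

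It then remains to express $R$ through the Gauss equation \eqref{GF} in terms of $\overline{c}$ and $A$, substitute into the curvature commutator, and trace against $A$: the constant-curvature part contributes $n\overline{c}\,|A|^2$ and the quadratic-in-$A$ part contributes $|A|^4$, giving $\mathrm{trace}(A\circ\Delta A) = (n\overline{c} + |A|^2)|A|^2$ and hence \eqref{sim}. The step I expect to be delicate is the sign bookkeeping. Because $M$ is spacelike and $N$ is timelike with $\overline{g}(N,N) = -1$, the Gauss formula \eqref{GF} carries the second fundamental form with a sign opposite to the Riemannian convention, and the Lorentzian curvature identities introduce further sign flips; it is precisely this that turns the $-|A|^4$ of the classical Riemannian Simons formula into the $+|A|^4$ appearing here. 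Keeping these signs consistent across the commutation and the Gauss substitution is the main obstacle, while the remainder is the standard Simons computation.
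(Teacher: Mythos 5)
Your proposal is correct, but it cannot be compared line-by-line with anything in the paper, because the paper never proves this lemma at all: it is quoted as a ``well known Simons-type formula'' with a pointer to the literature (Lemma 9.7 of Al\'ias--Mastrolia--Rigoli, reference \cite{AMR}). What you have written is, in effect, the proof that the citation hides, and it is the standard one: the Bochner expansion $\tfrac12\Delta|A|^2=|\nabla A|^2+\mathrm{trace}(A\circ\Delta A)$, the observation that in a space form $\overline{\mathrm{R}}(X,Y)N=0$ for tangent $X,Y$ so that Codazzi makes $\nabla A$ totally symmetric, the Ricci commutation of derivatives in which the Hessian-of-trace term dies because $\mathrm{trace}(A)\equiv 0$, and finally the substitution of the Gauss equation \eqref{gausseq}. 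Your sign analysis is also the right one and is the only genuinely delicate point: with the conventions \eqref{GF}--\eqref{WF} and $\overline{g}(N,N)=-1$, the quadratic-in-$A$ terms in \eqref{gausseq} carry the opposite sign to the Riemannian case, which is what converts the $-|A|^4$ of the classical Simons formula for minimal hypersurfaces into the $+|A|^4$ of \eqref{sim}; this is consistent with the paper's own use of the formula (the inequality \eqref{lapsf} and the nonnegative Ricci curvature in \eqref{rimi2} both depend on exactly this flip). So the contribution of your write-up is to make the lemma self-contained where the paper delegates it; the approach itself coincides with that of the cited reference, and I see no gap in it beyond the inherent bookkeeping you already flagged.
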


This lemma allows us to obtain the following result 

\begin{teor}
\label{teosf}
Let $\psi: M \longrightarrow \overline{M}$ be 
a stochastically complete maximal hypersurface in 
a spacetime $\overline{M}$ of constant sectional 
curvature $\overline{c} \geq 0$. If $\sup_M |A|^2 < + \infty$, then $M$ 
is totally geodesic.
\end{teor}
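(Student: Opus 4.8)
The plan is to apply the weak maximum principle for the Laplacian (characterizations (iii)--(iv) of Lemma \ref{lemastco}) to the function $u := |A|^2$, after extracting a suitable differential inequality from the Simons-type formula \eqref{sim}. First I would note that, since $\psi$ is a smooth immersion, the shape operator $A$ is smooth, so $u = |A|^2 \in C^2(M)$ and $u \geq 0$; by hypothesis $u^* := \sup_M u < +\infty$, so $u$ is an admissible test function for Lemma \ref{lemastco}.

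The key step is to turn \eqref{sim} into a one-sided inequality. Using that $|\nabla A|^2 \geq 0$ and that $\overline{c} \geq 0$ (so the term $n\overline{c}\,|A|^2$ is nonnegative as well), I would discard these two nonnegative contributions to obtain the pointwise estimate
\begin{equation*}
\Delta u \geq 2|A|^4 = 2u^2 .
\end{equation*}
This is precisely where the sign assumption $\overline{c}\geq 0$ is used: for $\overline{c}<0$ the curvature term could absorb $|A|^4$ and the estimate would break down.

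Finally I would invoke stochastic completeness. Applying characterization (iv) of Lemma \ref{lemastco} with the continuous function $f(s)=2s^2$ and the inequality $\Delta u \geq f(u)$ gives $f(u^*) = 2(u^*)^2 \leq 0$, whence $u^* = 0$; since $u \geq 0$ this forces $u \equiv 0$, i.e. $A \equiv 0$, so $M$ is totally geodesic. Equivalently, characterization (iii) supplies a sequence $\{p_k\}\subset M$ with $u(p_k) > u^* - 1/k$ and $\Delta u(p_k) < 1/k$; evaluating the inequality at $p_k$ yields $1/k > 2\,(u^* - 1/k)^2$, and letting $k\to +\infty$ gives $u^* = 0$.

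I do not expect a serious analytic obstacle here: once the differential inequality is in place the conclusion is immediate. The only point requiring care is that stochastic completeness delivers a one-sided (supremum) maximum principle, so the argument must be organized around the finite supremum of the nonnegative quantity $u = |A|^2$ rather than any two-sided estimate. Correspondingly, the boundedness hypothesis $\sup_M|A|^2 < +\infty$ is exactly what makes $u$ admissible in Lemma \ref{lemastco}, and I would regard it as the indispensable assumption rather than a technical difficulty to be overcome.
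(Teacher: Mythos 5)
Your proposal is correct and follows essentially the same route as the paper: both discard the nonnegative terms $|\nabla A|^2$ and $n\overline{c}\,|A|^2$ in the Simons-type formula \eqref{sim} to obtain $\Delta |A|^2 \geq 2|A|^4$, and then apply the weak maximum principle characterization of stochastic completeness from Lemma \ref{lemastco} to conclude $|A|^2 \equiv 0$. Your write-up is in fact slightly more detailed than the paper's, which leaves the final application of Lemma \ref{lemastco} implicit.
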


\begin{proof}
If $\overline{c} \geq 0$, we have from \eqref{sim}

\begin{equation}
\label{lapsf}
\Delta |A|^2 \geq 2 |A|^4.
\end{equation}

If $M$ is stochastically complete and $|A|^2$ is bounded from above, 
Lemma \ref{lemastco} enable us to conclude 
that $M$ must be totally geodesic.
\end{proof}

\begin{rem}
\normalfont
Note that Theorem \ref{teosf} extends the classical 
Calabi-Bernstein 
theorem to the stochastically complete case 
in $\mathbb{L}^{n+1}$ and 
$\mathbb{S}_1^{n+1}$ by means of a bound on the shape 
operator (compare with \cite[Thm. C]{CPRi}). In fact, this bound 
holds on certain 
spacelike embeddings of the sphere into de Sitter 
spacetime \cite[Thm. 1]{BKL}. Moreover, if we assume the 
completeness of the 
maximal hypersurface we can omit the bound on the 
shape operator since 
the classical Omori-Yau maximum principle 
holds on every complete 
maximal hypersurface in a Lorentzian spacetime of 
constant sectional 
curvature (see, for instance, \cite[Prop. 1]{Rm}), which was the 
key idea used in Cheng and Yau's proof of the 
Calabi-Bernstein theorem in $\mathbb{L}^{n+1}$ \cite{CY}.
\end{rem}

\section{A Calabi-Bernstein type result}
\label{secb}

Let $(F, g_F)$ be an $n(\geq 2)$-dimensional Riemannian manifold and 
let $f: I \longrightarrow \mathbb{R}^+$ be a smooth 
function. Given the GRW spacetime $I \times_f F$, consider 
the graph 

$$\Sigma_u = \{(u(p), p) : p \in \Omega \},$$

\noindent where $\Omega \subseteq F$, $u \in C^\infty(\Omega)$ 
and $u(\Omega)\subset I$. The Lorentzian metric on $I \times_f F$ induces, 
via the graph $\Sigma_u$, a metric on 
$\Omega$ given by

\begin{equation}
\label{megra}
 g_u = -du^2 + f(u)^2 g_F.
\end{equation}

\noindent This metric $g_u$ is Riemannian (and thus, $\Sigma_u$ is spacelike) 
if and only if

\begin{equation}
\label{graespa}
|Du| < f(u).
\end{equation}

\noindent In this case, the future pointing unit normal vector 
field on $\Sigma_u$ is

\begin{equation}
\label{normalvec}
N = \frac{1}{f(u) \sqrt{f(u)^2 - |Du|^2}} \left( f(u)^2 \partial_t + Du \right).
\end{equation}

\noindent The spacelike graph is said to be 
entire when $\Omega = F$. Now, \cite[Prop. 7.35]{O'N} allows us 
to obtain that the shape operator associated to $N$ is given by

\begin{eqnarray}
\label{shag}
A X &=& \frac{-f(u)}{\sqrt{f(u)^2 - |Du|^2}} \left( \frac{f'(u)}{f(u)} X - \frac{f'(u) \ g_F(Du, X)}{f(u) \left( f(u)^2 - |Du|^2 \right)} Du \right. \nonumber \\ 
\vspace*{2mm}
&& \left. + \frac{g_F(D_X Du, Du)}{f(u)^2 \left( f(u)^2 - |Du|^2 \right)} Du + \frac{1}{f(u)^2} D_X Du \right)
\end{eqnarray}

\noindent for all $X \in \mathfrak{X}(\Omega)$. Computing the trace of (\ref{shag}) 
yields that the mean curvature function of a spacelike graph associated to $N$ is

\begin{equation}
\label{curvmed}
H =  \mathrm{div} \left( \frac{Du}{n f(u) \sqrt{f(u)^2 - |Du|^2}} \right) 
 +\frac{f'(u)}{n \sqrt{f(u)^2 - |Du|^2}} \left( n + \frac{|Du|^2}{f(u)^2} \right),
\end{equation}

\noindent where $\mathrm{div}$ represents the divergence operator in 
$(F, g_F)$. We can use Theorem \ref{teomon} to prove a 
new uniqueness result for 
the following uniformly elliptic PDE, which extends 
\cite[Thm. 22]{ARR} and \cite[Thm. 5.11]{RRS1} to a 
nonparabolic Riemannian manifold $F$.

\begin{teor}
\label{teocb}
Let $f: I \longrightarrow \mathbb{R}^+$ be a smooth function such 
that $\inf f > 0$ and 
$\sup \ (\log f)'' < ~0$. Then, the only entire solutions 
to the equation

\begin{gather*}
\label{calb} 
\mathrm{div} \left( \frac{Du}{f(u) \sqrt{f(u)^2 - |Du|^2}} \right) 
 = - \frac{f'(u)}{\sqrt{f(u)^2 - |Du|^2}} \left( n + \frac{|Du|^2}{f(u)^2} \right), \tag{E.1} \\[2ex]
|Du| < \lambda f(u), \ \ 0 < \lambda < 1, \tag{E.2}
\end{gather*}

\noindent on 
a complete Riemannian manifold $F$ with non negative sectional 
curvature are the 
constant functions $u = t_0$, with $t_0 \in I$ such 
that $f'(t_0) = 0$.
\end{teor}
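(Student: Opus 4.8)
The plan is to read (E.1) as the maximal hypersurface equation and then invoke Theorem \ref{teomon}. Rearranging \eqref{curvmed} (multiply by $n$), equation (E.1) is precisely the condition $H\equiv 0$ for the graph $\Sigma_u$; hence an entire solution $u$ satisfying (E.2) is nothing but an entire maximal graph, i.e.\ a maximal hypersurface $\psi:F\to I\times_f F$ with height function $\tau=u$. It therefore suffices to verify the four hypotheses of Theorem \ref{teomon}: that $I\times_f F$ obeys the NCC, that $\sup_M(\log f)''(\tau)<0$, that the hyperbolic angle of $\Sigma_u$ is bounded, and that $(\Sigma_u,g_u)$ is stochastically complete. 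Granting these, Theorem \ref{teomon} confines $\Sigma_u$ to a totally geodesic slice, which gives the statement.

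The first three are immediate. Since $(F,g_F)$ has nonnegative sectional curvature we have $\mathrm{Ric}^F\geq 0$; together with $(\log f)''<0$ the term $-(n-1)f^2(\log f)''$ is nonnegative, so \eqref{ncc12} holds and the NCC is satisfied. Because $\tau=u$ takes values in $I$, one has $\sup_M(\log f)''(\tau)\leq\sup_I(\log f)''<0$. Finally, combining \eqref{coosh} with \eqref{normalvec} gives $\cosh\varphi=f(u)/\sqrt{f(u)^2-|Du|^2}$, so (E.2) yields $\cosh\varphi<1/\sqrt{1-\lambda^2}$ and the hyperbolic angle is bounded.

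The crux is the stochastic completeness of $(\Sigma_u,g_u)$, which I would obtain from a two-sided comparison between $g_u$ and $g_F$ followed by Lemma \ref{teogri}. From \eqref{megra} and the Cauchy--Schwarz inequality $g_F(Du,X)^2\leq|Du|^2 g_F(X,X)$, estimate (E.2) yields the uniform lower bound $g_u(X,X)\geq(f(u)^2-|Du|^2)g_F(X,X)\geq(1-\lambda^2)(\inf f)^2 g_F(X,X)$, which is nondegenerate since $\inf f>0$. For the matching upper bound I would first argue that the hypotheses force $\sup f<+\infty$: as $\sup_I(\log f)''<0$ makes $\log f$ uniformly strictly concave, an unbounded $I$ (or $f$ approaching an infinite endpoint) would drive $f$ to $0$, contradicting $\inf f>0$, so $I$ is bounded and $f$ stays between two positive constants. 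The resulting bi-Lipschitz equivalence $c^2 g_F\leq g_u\leq(\sup f)^2 g_F$ (with $c^2=(1-\lambda^2)(\inf f)^2$) makes $(\Sigma_u,g_u)$ complete and, since the complete nonnegatively curved fiber has at most Euclidean volume growth by Bishop--Gromov, transfers this polynomial volume growth to $(\Sigma_u,g_u)$; hence $r/\log\mathrm{vol}(B_r)\notin L^1(+\infty)$ and Lemma \ref{teogri} gives stochastic completeness. I expect this comparison-and-volume step to be the main obstacle, the delicate points being the boundedness of $f$ and the transfer of the volume estimate.

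With all four hypotheses in hand, Theorem \ref{teomon} places $\Sigma_u$ in a totally geodesic spacelike slice $\{t_0\}\times F$; equivalently, $u\equiv t_0$ is constant and, since a slice is maximal exactly when $f'(t_0)=0$, we get $f'(t_0)=0$. Conversely such a constant trivially solves (E.1)--(E.2). As $(\log f)''<0$ forces $f'/f$ to be strictly decreasing, at most one such $t_0$ exists, so the statement simultaneously encodes uniqueness (when $f'$ vanishes somewhere) and nonexistence (when it does not).
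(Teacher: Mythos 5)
Your proof is correct, and its skeleton (read (E.1)--(E.2) as an entire maximal graph with bounded hyperbolic angle, verify the hypotheses of Theorem \ref{teomon}, and conclude) is the same as the paper's; the genuine divergence is in the crucial step, the stochastic completeness of $(\Sigma_u,g_u)$. The paper, after getting completeness from the same lower metric bound, estimates the Ricci curvature of the graph via the Gauss equation and \cite[Prop.~7.42]{O'N} (its \eqref{rim1}--\eqref{rim2}), uses $\sup(\log f)''<0$ and the nonnegative sectional curvature of $F$ to bound it from below, and then invokes Omori--Yau (Lemma \ref{teooy}). You instead establish the two-sided comparison $c^2 g_F\leq g_u\leq(\sup f)^2 g_F$, transfer the Euclidean volume growth of $(F,g_F)$ (Bishop--Gromov) to $g_u$ through ball inclusion and volume-form comparison, and invoke Grigor'yan's criterion (Lemma \ref{teogri}). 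Your auxiliary claim --- that $\sup_I(\log f)''<0$ together with $\inf f>0$ forces $I$ to be bounded and hence $\sup f<+\infty$ --- is correct (uniform strict concavity of $\log f$ on an unbounded interval would drive $\log f$ to $-\infty$, and a finite concave function on a bounded interval is bounded above); it is an extraction from the hypotheses that the paper never needs. As for what each route buys: the paper's argument avoids any discussion of $\sup f$ and of volume growth altogether, while yours avoids the extrinsic Gauss-equation computation entirely and, notably, uses the curvature of $F$ only through $\mathrm{Ric}^F\geq 0$ (for Bishop--Gromov and for the NCC via \eqref{ncc12}), whereas the paper's estimate \eqref{rim2} genuinely needs nonnegative sectional curvature because the projected frame $\{E_i^F\}$ is not $g_F$-orthonormal; so your argument in fact proves the theorem under the weaker hypothesis $\mathrm{Ric}^F\geq 0$.
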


\begin{proof}

Let us begin by observing that (E.2) implies that 
the hyperbolic angle of the graph $\Sigma_u$ is bounded by 

\begin{equation}
\label{chb}
\cosh \varphi < \frac{1}{\sqrt{1 - \lambda^2}}.
\end{equation}

Furthermore, from Schwarz inequality we can deduce

\begin{equation}
\label{guu}
g_u(v, v) \geq |Du|^2 g_u(v, v) + f(u)^2 g_F( d \pi_F (v), d \pi_F (v)), \ \mathrm{for \ all} \ v \in T \Sigma_u.
\end{equation}

Therefore,

\begin{equation}
\label{guu2}
g_u(v, v) \geq \frac{f(u)^2}{\cosh^2 \varphi} g_F( d \pi_F (v), d \pi_F (v)).
\end{equation}

Denoting by $\mathcal{L}_F(\gamma)$ and $\mathcal{L}_u(\gamma)$ 
the length 
of a smooth curve $\gamma$ on $F$ with respect to the 
metrics $g_F$ and $g_u$, respectively, using (\ref{guu2}) 
and (\ref{chb}) we obtain

\begin{equation}
\label{lu3}
\mathcal{L}_u(\gamma) \geq (1 - \lambda^2) (\inf f(u)^2) \mathcal{L}_F(\gamma).
\end{equation} 

Thus, the completeness of $(F, g_F)$ and the fact that $\inf f > 0$ 
ensures that the metric $g_u$ is also complete. Once the completeness 
of the graph is guaranteed, we also 
have from the Gauss equation

\begin{equation}
\label{gausseq}
g(\mathrm{R}(X, Y) Z, W) = \overline{g}(\overline{\mathrm{R}}(X, Y) Z, W) - g(A Y, Z) g(A X, W) + g(A X, Z) g(A Y, W),
\end{equation}

\noindent for $X, Y, Z, W \in \mathfrak{X}(M)$, where $\mathrm{R}$ and $\overline{\mathrm{R}}$ denote the curvature tensors of $\Sigma_u$ and 
$I \times_f F$ respectively. Choosing a local orthonormal reference 
frame $\{E_1, \dots , E_n\}$ around a point $p \in \Sigma_u$ 
we deduce that the Ricci curvature of the maximal graph verifies

\begin{equation}
\label{rim1}
\mathrm{Ric}(X, X) \geq \sum_{i=1}^n \overline{g}(\overline{\mathrm{R}}(X, E_i) E_i, X),
\end{equation}
 
\noindent for all $X \in \mathfrak{X}(\Sigma_u)$. From \cite[Prop. 7.42]{O'N} we obtain

\begin{eqnarray}
\label{rim2}
\sum_{i=1}^n \overline{g}(\overline{\mathrm{R}}(X, E_i) E_i, X) &=& f(u)^2 \sum_{i=1}^n g_F(\mathrm{R^F}(X^F, E_i^F) E_i^F, X^F) \\ \nonumber
& & + (n-1) \frac{f'(u)^2}{f(u)^2} |X|^2 - (\log f)''(u) \sinh^2 \varphi |X|^2 \\ \nonumber
& &  -(n-2)(\log f)''(u) \overline{g}(X, \nabla u)^2,
\end{eqnarray}

\noindent where $\mathrm{R^F}$ is the curvature tensor of 
$F$ and $X^F$ and $E_i^F$ denote
the projections of $X$ and $E_i$ on the fiber. If 
$\sup \ (\log f)'' < ~0$ and 
the sectional curvature of the fiber is non negative, we can clearly 
see combining (\ref{rim1}) and (\ref{rim2}) that 
the Ricci curvature of the maximal graph is bounded from below. Since  
it is also complete, Lemma \ref{teooy} ensures that 
the Omori-Yau maximum principle holds 
on $\Sigma_u$. Consequently, the spacelike graph is a stochastically 
complete maximal hypersurface in this GRW spacetime that satisfies 
the NCC, so we can 
call Theorem \ref{teomon} to conclude the proof.
\end{proof}

\section*{Acknowledgements} 

The second author is partially supported by Spanish MICINN 
project PID2020-116126GB-I00.

\end{document}